\let\phi\varphi
\newcommand\tab[1][.5cm]{\hspace*{#1}}
\newtheorem{theorem}{Theorem}[section]
\newtheorem{corollary}[theorem]{Corollary}
\newtheorem{lemma}[theorem]{Lemma}
\newtheorem{proposition}[theorem]{Proposition}
\newtheorem{conjecture}[theorem]{Conjecture}
\theoremstyle{remark}
\newtheorem{remark}[theorem]{Remark}
\newcommand*{\bbb}[1]{{\mathbb{#1}}}
\newcommand{\Q}{\bbb{Q}}
\newcommand{\Z}{\bbb{Z}}
\newcommand{\A}{\bbb{A}}
\newcommand{\Ker}{\text{Ker }}
\newcommand{\Gal}{\text{Gal}}
\newcommand{\Cl}{\text{Cl}}
\newcommand{\Nrm}{\text{N}}
\newcommand{\Mod}[1]{\ (\mathrm{mod}\ #1)}
\newcommand{\p}{\mathfrak{p}}
\newcommand{\PP}{\mathfrak{P}}
\newcommand{\bPP}{\bar{\mathfrak{P}}}
\newcommand{\rank}{\text{Rank}}
\newcommand{\Am}{\text{Am}}
\newcommand{\vep}{\varepsilon}
\newcommand{\mO}{\mathcal{O}}
\newcommand{\mP}{\mathcal{P}}
\title{An Analogue of Greenberg's Conjecture for CM Fields}
\author{Peikai Qi}
\email{qipeikai@msu.edu}
\address{Michigan State University, East Lansing, Michigan, USA}
\date{\today}
\author{Matt Stokes}
\email{mathewsonstokes@gmail.com}
\address{Randolph College, Lynchburg, Virginia, USA}
\thanks{Thanks to Lawrence C. Washington for his interest in the topics of this paper.  
The authors would also like to thank Jie Yang and Preston Wake for their useful comments and suggestions.}
\begin{document}

\begin{abstract}
Let $K$ be a CM field and $K^+$ be the maximal totally real subfield of $K$. Assume that the primes above $p$ in $K^+$ split in $K$. Let $S$ be a set containing exactly half of the prime ideals in $K$ above $p$.  We show, assuming Leopoldt's conjecture is true for $K$ and $p$, that there is a unique $\Z_p$-extension of $K$ unramified outside of $S$ (the $S$-ramified $\Z_p$-extension of $K$). Such $\Z_p$-extensions for CM fields have similar properties to the cyclotomic $\Z_p$-extensions of a totally real field. For example, Greenberg proved some criterion for the Iwasawa invariants $\mu=\lambda=0$ of the cyclotomic $\Z_p$-extension of a totally real field, and we will prove analogous results for the $S$-ramified $\Z_p$-extension of a CM field. We also give a numerical criterion for the Iwasawa invariants $\mu=\lambda=0$ for an imaginary biquadratic field, which is analogous to the one given by Fukuda and Komatsu for real quadratic fields.

\end{abstract}
\maketitle
\section{Introduction}
Let $F$ be any number field and $p$ be an odd prime. Let $F\subset F_1\subset F_2\subset\cdots\subset F_n\subset \cdots\subset F_\infty=\bigcup_n F_n$ be a $\Z_p$-extension of $F$, that is, $\Gal(F_n/F)=\Z/p^n\Z$ and $\Gal(F_\infty/F)=\Z_p$. Let $A_n$ be the $p$ primary part of the class group of $F_n$.  Iwasawa \cite{MR0349627} proved that there are three integers $\lambda$, $\mu$, and $\nu$ such that 
\[|A_n|=p^{\mu p^n+\lambda n+\nu}\]
when $n$ is sufficiently large.  These are the so called Iwasawa invariants for the $\Z_p$-extension $F_{\infty}/F$.

For each number field $F$, there is one obvious $\Z_p$-extension. Let $\zeta_{p^n}$ be the primitive $p^n$-th root of unity. Then $\Q(\zeta_{p^{n+1}})/\Q$ has a unique degree $p^n$ sub-extension of $\Q$ denoted as $\Q_n$. Put $\Q_\infty=\bigcup_n \Q_n$ and $F_\infty=F\Q_\infty$. Then $F_\infty/F$ is a $\Z_p$-extension of $F$ which is called the cyclotomic $\Z_p$-extension of $F$. Let $F_n$ be the $n$-th layer of the $\Z_p$-extension of $F_\infty/F$.

When $F$ is a totally real field, Greenberg \cite{MR0401702} conjectured that $\mu=\lambda=0$ for the cyclotomic $\Z_p$-extension of $F$. In other words, the order of the groups $A_n$ should be bounded as $n\rightarrow \infty$ for the cyclotomic $\Z_p$-extension of a totally real field.  Much research has been done to study this well-known conjecture, for example, \cite{MR0845905}\cite{MR1373702}\cite{MR1854114}.  

\subsection{Main Results}
Now, instead of considering the cyclotomic $\Z_p$-extension of a totally real field, we will consider a certain $\Z_p$-extension of a CM field. Throughout the paper $p$ will be an odd prime.  Let $K$ be a CM field and $K^+$ be the maximal subfield fixed by complex conjugation. Let $S^+ = \{\mP_1,\mP_2,\cdots,\mP_s\}$ be a set containing primes of $K^+$ above $p$, and assume that $\mP_i$ splits in $K$ as $\mP_i\mO_K=\PP_i\tilde{\PP}_i$ for $1\leq i\leq s$, where $\tilde{\PP}_i$ is the complex conjugation of $\PP_i$.  We write
\[
S = \{\PP_1, \PP_2, \dots, \PP_s\}.
\]
In Section \ref{sec1}, we prove the following theorem (which is Theorem \ref{unique Zp} in the Section \ref{sec1}).

\begin{theorem}\label{thrm1.1}
  Assume that primes above $p$ in $K^+$ split in the CM field $K$. Then there is a $\Z_p$-extension $K_\infty/K$ unramified outside $S$. If the Leopoldt's conjecture holds, then such $\Z_p$-extension is unique. 
\end{theorem}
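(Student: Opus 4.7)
The plan is to use class field theory to compute the $\Z_p$-rank of $X_S := \Gal(M_S/K)$, where $M_S/K$ denotes the maximal abelian pro-$p$ extension of $K$ unramified outside $S$. Existence of a $\Z_p$-extension unramified outside $S$ will follow from $\rank_{\Z_p} X_S \geq 1$, while uniqueness under Leopoldt's conjecture will follow from $\rank_{\Z_p} X_S = 1$.

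I would begin with the standard exact sequence
\[
0 \longrightarrow U_S \big/ \overline{\iota(E_K\otimes\Z_p)} \longrightarrow X_S \longrightarrow A_K \longrightarrow 0,
\]
obtained by taking the analogous sequence for the maximal pro-$p$ abelian extension of $K$ unramified outside all primes above $p$ and quotienting by the inertia subgroups at $\PP\notin S$. Here $U_S := \prod_{\PP\in S}U_\PP^{(1)}$, $\iota$ is the diagonal embedding of global units, and $A_K$ is the finite $p$-primary part of the class group of $K$ (archimedean places contribute nothing since $p$ is odd). Thus $\rank_{\Z_p} X_S = \rank_{\Z_p} U_S - \rank_{\Z_p}\overline{\iota(E_K\otimes\Z_p)}$.

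Next I would compute each rank. Because every $\PP\in S$ lies above a prime $\mP$ of $K^+$ that splits in $K$, we have $K_\PP\cong K^+_\mP$, and summing local degrees yields $\rank_{\Z_p} U_S = \sum_{\mP\in S^+}[K^+_\mP:\Q_p] = [K^+:\Q] =: d$. For the image of the units, I would exploit the CM structure: since $[E_K:\mu_K\cdot E_{K^+}]$ is finite and $p$ is odd, the $\Z_p$-rank of $\iota(E_K\otimes\Z_p)$ in $U_S$ equals that of the composite $E_{K^+}\otimes\Z_p \to E_K\otimes\Z_p \to U_S$. Crucially, because each $\PP$ is split, the embedding of $\epsilon\in E_{K^+}$ into $K_\PP^\times$ coincides with its embedding into $(K^+_\mP)^\times$, so this composite is exactly the classical Leopoldt map
\[
E_{K^+}\otimes\Z_p \longrightarrow \prod_{\mP\in S^+}U_\mP^{(1)}
\]
for $K^+$ and $p$. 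This identification is the key step.

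Standard descent shows that Leopoldt's conjecture for $K$ passes to the subfield $K^+$, so this Leopoldt map is injective and its image has $\Z_p$-rank $r_1(K^+)+r_2(K^+)-1 = d-1$. Combining these ranks gives $\rank_{\Z_p} X_S = d - (d-1) = 1$, which yields the claimed unique $\Z_p$-extension unramified outside $S$. For existence without assuming Leopoldt, the same computation forces $\rank_{\Z_p} X_S \geq 1$ unconditionally, since the image of $E_K\otimes\Z_p$ has $\Z_p$-rank at most $d-1$ in any case. The main obstacle I anticipate is the careful torsion bookkeeping -- specifically the possible $p$-torsion contributed by $\mu_K$ and by $U_\PP^{(1)}$ when $\mu_p\subset K_\PP$ -- together with verifying the standard descent of Leopoldt's conjecture from $K$ down to $K^+$; these are well-known but demand attention to avoid errors in the exact rank count.
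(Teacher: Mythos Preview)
Your proposal is correct and follows essentially the same approach as the paper: both compute the $\Z_p$-rank of $\Gal(M_S/K)$ by identifying the local unit contribution at $S$ as $\Z_p^{[K^+:\Q]}$, using the finite index of $\mu_K E_{K^+}$ in $E_K$ to reduce to the image of $E_{K^+}$, and then invoking the split hypothesis $K_{\PP_i}\cong K^+_{\mP_i}$ to identify that image with the classical Leopoldt map for $K^+$. The paper carries this out via an explicit id\`ele computation in the style of Washington's Theorem~13.4 (including the auxiliary Lemmas~\ref{E_1 intersection} and~\ref{quotient finite}), whereas you package the same content into the inertia--class-group exact sequence, but the mathematical core is identical.
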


We will refer to the $\Z_p$-extension $K_{\infty}/K$ in Theorem \ref{thrm1.1} as the $S$-ramified $\Z_p$-extension of $K$.  Theorem \ref{thrm1.1} is a generalization of Section 2 of Goto \cite{MR2293501}.  The difference in this paper is that we do not assume $p$ splits completely in $K$, nor do we assume $K$ is abelian.  The theorem can also be viewed as an analogue of the fact that there is only one $\Z_p$-extension of a totally real field.

The $S$-ramified $\Z_p$-extensions of a CM field $K$ is similar to the cyclotomic $\Z_p$-extenion of $K^+$ in certain cases.  For instance Fukuda and Komatsu \cite{MR1969639},\cite{MR3240810} give numerical evidence that the $\lambda$-invariant vanishes for $S$-ramified $\Z_p$-extensions of imaginary quadratic fields. Hence, we propose the following analogue of Greenberg's conjecture. 

\begin{conjecture}\label{conjecture}
 Let $K_\infty/K$ be the $S$-ramified $\Z_p$-extension defined as Theorem \ref{unique Zp}. Then $A_n$ will be bounded as $n\rightarrow \infty$. In other words, the Iwasawa invariant $\mu=\lambda=0$. 
\end{conjecture}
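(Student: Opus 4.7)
The plan is to adapt Greenberg's classical approach for the cyclotomic $\Z_p$-extension of a totally real field to the $S$-ramified tower $K_\infty/K$ constructed in Theorem \ref{thrm1.1}. First I would set up the standard Iwasawa-theoretic framework: let $X_\infty = \varprojlim_n A_n$ be the inverse limit of the class groups under the norm maps, regarded as a finitely generated torsion module over the Iwasawa algebra $\Lambda = \Z_p[[T]]$, where $T = \gamma - 1$ for a topological generator $\gamma$ of $\Gal(K_\infty/K)$. By class field theory, $X_\infty \cong \Gal(L_\infty/K_\infty)$, where $L_\infty$ is the maximal unramified abelian pro-$p$ extension of $K_\infty$. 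The conjecture $\mu = \lambda = 0$ is equivalent to $X_\infty$ being finite, which in turn is equivalent to a Greenberg-style capitulation statement: every ideal class in each $A_n$ eventually becomes trivial in $A_\infty = \varinjlim_n A_n$.

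Second, I would exploit two structural features particular to this setting. The primes in $S$ are totally ramified in $K_\infty/K$ (essentially built into the construction behind Theorem \ref{thrm1.1}), which controls the inertia contribution and lets one compare $X_\infty$ with an Iwasawa module defined in terms of $S$-units. At the same time, complex conjugation acts on $X_\infty$, and since $p$ is odd it splits as $X_\infty = X_\infty^+ \oplus X_\infty^-$. I would aim to identify $X_\infty^+$ with the classical Iwasawa module for the cyclotomic $\Z_p$-extension of $K^+$, reducing the ``plus'' part of the conjecture to Greenberg's original conjecture for $K^+$, and then analyze $X_\infty^-$ using the asymmetry of $S$ (we picked one prime above each $\mP_i$, not both) together with reflection-type arguments relating ideal classes and units. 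For the $\mu$-part, the case $K/\Q$ abelian should follow by transporting Ferrero--Washington from $K^+$; in the general CM case one would need $p$-adic $L$-function input coming from the CM structure.

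The main obstacle is that Conjecture \ref{conjecture} is genuinely open, in essentially the same way as the classical Greenberg conjecture over $K^+$: no general capitulation proof is known, and showing $\lambda = 0$ amounts to ruling out cyclotomic-type zeros of a suitable characteristic ideal, which is not automatic even granting Greenberg's conjecture for $K^+$. A realistic strategy is therefore to split the problem into two more tractable pieces: prove a sufficient Greenberg-type criterion for $\mu = \lambda = 0$ in the $S$-ramified CM setting, and verify it numerically in concrete families such as imaginary biquadratic fields, which is what the rest of the paper undertakes.
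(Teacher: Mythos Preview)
The statement is a \emph{conjecture}, and the paper does not prove it; immediately after stating it the authors write ``While we can not prove this conjecture, we can show that the conjecture holds under some assumptions.'' So there is no proof in the paper to compare against, and your final paragraph correctly identifies what the paper actually does: it proves Greenberg-style \emph{criteria} (Theorems \ref{p inert} and \ref{TH4.1}) and gives a numerical criterion in the biquadratic case (Theorem \ref{TH5.7}).

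That said, the strategy you sketch in your second paragraph has a genuine structural problem, not just an ``it's open'' obstacle. Complex conjugation does \emph{not} act on $X_\infty$ here. The set $S$ consists of one prime $\PP_i$ over each $\mP_i$ and deliberately excludes its conjugate $\tilde\PP_i$; hence complex conjugation sends the $S$-ramified $\Z_p$-extension $K_\infty$ to the $\tilde S$-ramified $\Z_p$-extension, which is a different field. In particular $K_\infty/\Q$ is not Galois, $K_\infty$ has no natural ``maximal totally real subfield'' tower sitting over $K^+$, and there is no decomposition $X_\infty = X_\infty^+ \oplus X_\infty^-$ to exploit. For the same reason one cannot hope to identify any ``plus part'' with the cyclotomic Iwasawa module of $K^+$: the intersection $K_\infty \cap K^+_\infty$ is just $K^+$ (the $S$-ramified tower is linearly disjoint from the cyclotomic tower over $K$), so there is no direct Galois-theoretic bridge of the kind you propose. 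The paper's actual link between the two towers (Proposition \ref{P5.5}, Corollary \ref{ambiguous com}) is purely \emph{local}: it compares norm indices of units by matching local conditions at primes above $p$, not by any global $\pm$-decomposition.

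A smaller point: total ramification of the primes in $S$ is not ``essentially built into'' Theorem \ref{thrm1.1}; the paper imposes it separately as assumption \eqref{ass2} (and the weaker \eqref{ass4}), precisely because it is not automatic.
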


While we can not prove this conjecture, we can show that the conjecture holds under some assumptions. From now on, we assume that $p$ is an odd prime and primes above $p$ in $K^+$ split in $K$ and all primes in $S$ are totally ramified in the $S$-ramified $\Z_p$-extension.  They correspond  to the Assumption \eqref{ass1}, \eqref{ass3}, and \eqref{ass2} in Section \ref{sec1}.

Let $i_{n,m}:\Cl(K_n) \rightarrow \Cl(K_m)$ for $m\geq n$ be the natural map between class groups induced by inclusion. Let $H_n=\cup_{m\geq n} \Ker(i_{n,m})$. 
\begin{theorem}
    Assume that $p$ is inert in $K^+/\Q$ and Leopoldt's conjecture holds for $K$. Then the following are equivalent.
    \begin{enumerate}
        \item $A_0=H_0$.
        \item $|A_n|$ is bounded as $n\rightarrow \infty$.
    \end{enumerate}  
\end{theorem}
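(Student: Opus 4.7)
The plan is to translate the problem into the structure of the Iwasawa module $X := \Gal(L_\infty/K_\infty)$, where $L_\infty$ denotes the maximal unramified abelian pro-$p$ extension of $K_\infty$, and then reduce the nontrivial direction to Nakayama's lemma. Since $p$ is inert in $K^+/\Q$, the set $S$ consists of a single prime $\PP$, which by Assumption \eqref{ass2} is totally ramified in $K_\infty/K$. Write $\Gamma = \Gal(K_\infty/K)$, fix a topological generator $\gamma$, and set $T = \gamma - 1$, $\Lambda = \Z_p[[T]]$, $\omega_n = (1+T)^{p^n}-1$, and $\nu_{n,m} = \omega_m/\omega_n$ for $m\geq n$.

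First I would identify $A_n$ as a quotient of $X$. Because $\PP$ is the unique prime of $K$ ramified in $K_\infty/K$, totally ramified, and $\tilde{\PP}$ contributes no inertia (it is unramified in $K_\infty/K$ and $L_\infty/K_\infty$ is unramified), there is a canonical lift $\tilde{\sigma}\in\Gal(L_\infty/K)$ of $\gamma$ generating the inertia at the prime above $\PP$. A standard commutator computation then yields $A_n\cong X/\omega_n X$ for every $n\geq 0$, and under this identification the extension-of-ideals map $i_{n,m}$ corresponds to multiplication by $\nu_{n,m}$ (sanity check: composing with the natural norm $X/\omega_m X \twoheadrightarrow X/\omega_n X$ produces multiplication by $\nu_{n,m}\equiv p^{m-n}\pmod{\omega_n}$, exactly $[K_m:K_n]$). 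Finiteness of $A_0$ forces $X$ to be $\Lambda$-torsion, and finiteness of each $A_n$ forces the characteristic ideal of $X$ to be coprime to every $\omega_n$, hence to every $\nu_{0,m}$.

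For the direction (b)$\Rightarrow$(a), assume $|A_n|$ is bounded. Then $X$ is finite, so $\omega_N X = 0$ for some $N$. From $\nu_{0,m}=\nu_{0,N}\,\nu_{N,m}$ and $\nu_{N,m}\equiv p^{m-N}\pmod{\omega_N}$, multiplication by $\nu_{0,m}$ on $X$ reduces to multiplication by $p^{m-N}\nu_{0,N}$, which vanishes once $m-N$ exceeds the $p$-exponent of $X$. Hence $i_{0,m}=0$ for all sufficiently large $m$, and every class of $A_0$ capitulates, giving $A_0=H_0$.

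For the direction (a)$\Rightarrow$(b), finiteness of $A_0$ together with $A_0=H_0$ supplies some $m$ with $i_{0,m}=0$, i.e.,
\[
\nu_{0,m} X \subseteq \omega_m X = T\,\nu_{0,m} X, \qquad \text{equivalently} \qquad X = TX + X[\nu_{0,m}].
\]
The coprimality of the characteristic ideal of $X$ with $\nu_{0,m}$ forces $X[\nu_{0,m}]$ to be pseudo-null, hence finite in the two-dimensional regular local ring $\Lambda$, so $X[\nu_{0,m}]$ is contained in the maximal finite $\Lambda$-submodule $X_{\mathrm{fin}}\subseteq X$. Passing to $\bar X := X/X_{\mathrm{fin}}$ yields $\bar X = T\bar X$, and since $T$ lies in the maximal ideal $\mathfrak m=(p,T)$ of $\Lambda$, this is the Nakayama condition $\mathfrak m\bar X=\bar X$ for a finitely generated $\Lambda$-module, forcing $\bar X=0$. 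Hence $X=X_{\mathrm{fin}}$ is finite and $|A_n|\leq |X|$ is bounded. The principal technical hurdle is the clean identification $A_n\cong X/\omega_n X$ under these precise ramification hypotheses; once that is in place, the direction (a)$\Rightarrow$(b) collapses to a one-line Nakayama argument.
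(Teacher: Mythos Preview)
Your proof is correct and takes a genuinely different route from the paper's. For (a)$\Rightarrow$(b) the paper follows Greenberg's original arithmetic argument: it invokes Leopoldt's conjecture (via Proposition~\ref{B bounded}) to bound $|B_n|$, feeds this into Chevalley's formula \eqref{Cheva} to bound the unit norm index $[\mO_K^*:\Nrm(\mO_{K_n}^*)]$, and then runs a Hilbert~90 argument to show that any $c\in A_n\setminus H_n$ with $c^{\sigma-1}\in H_n$ must eventually land in $\Am_{st}(K_s/K)[p^\infty]=i_{0,s}(A_0)D_s\subset H_s$, a contradiction. You instead exploit the single totally ramified prime to obtain $A_n\cong X/\omega_n X$ with $i_{n,m}$ realized as multiplication by $\nu_{n,m}$, and then collapse (a)$\Rightarrow$(b) to a Nakayama step on $\bar X=X/X_{\mathrm{fin}}$. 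Two points are worth recording: first, your argument never actually uses Leopoldt's conjecture, so you have in effect proved a slightly stronger statement than the paper claims; second, your method leans essentially on there being exactly one ramified prime, so that $A_n\cong X/\omega_n X$ holds without inertia correction terms, whereas the paper's Chevalley/Hilbert~90 machinery is precisely what adapts to the multi-prime setting of Section~\ref{sec3}.
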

This is Theorem \ref{p inert} in Section \ref{sec2}. Let $B_n$ be the subgroup of $A_n$ fixed by $\Gal(K_n/K)$. Let $D_n$ be the subgroup of $A_n$ generated by prime ideals above $S$. 
\begin{theorem}
    Assume $p$ splits completely in $K^+/\Q$ and Leopoldt's conjecture holds for $K$. Then the following are equivalent.
      \begin{enumerate}
        \item $B_n=D_n$ for all sufficiently large $n$.
        \item $|A_n|$ is bounded as $n\rightarrow \infty$.
    \end{enumerate}
\end{theorem}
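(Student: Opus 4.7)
The plan is to translate both conditions into properties of the Iwasawa module $X := \varprojlim_n A_n \cong \Gal(L_\infty/K_\infty)$, where $L_\infty$ is the maximal unramified abelian pro-$p$ extension of $K_\infty$, viewed as a finitely generated module over $\Lambda := \Z_p[[T]]$ with $T = \gamma - 1$ for a topological generator $\gamma$ of $\Gamma := \Gal(K_\infty/K)$. Statement (b) is equivalent to the finiteness of $X$: the Iwasawa formula $|A_n| = p^{\mu p^n + \lambda n + \nu}$ shows $|A_n|$ is bounded iff $\mu = \lambda = 0$, which in turn forces $X$ to be a finitely generated torsion $\Z_p$-module of $\Z_p$-rank zero, i.e.\ finite.

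The first step is to set up the ramification dictionary. Since each of the $s = [K^+:\Q]$ primes in $S$ is totally ramified in $K_\infty/K$, fix generators $\sigma_1,\dots,\sigma_s \in \Gal(L_\infty/K)$ of the inertia subgroups at chosen places above $\PP_1,\dots,\PP_s$, normalized so that $\sigma_1 \mapsto \gamma$ and $\sigma_i \mapsto \gamma^{u_i}$ with $u_i \in \Z_p^\times$. Set $a_i := \sigma_1^{-u_i}\sigma_i \in X$ for $i \geq 2$ and let $Y \subset X$ be the closed $\Lambda$-submodule they generate. A standard class-field-theoretic argument (generalizing the one-prime case treated in Washington's textbook) yields
\[
A_n \;\cong\; X \bigm/ (\omega_n X + \nu_n Y),
\]
where $\omega_n = (1+T)^{p^n}-1$ and $\nu_n = \omega_n/T$. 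Under this isomorphism, $D_n$ is identified with the image in $A_n$ of $Y$ together with the class of $\PP_{1,n}$ (coming from $\sigma_1$), while $B_n = A_n^{\Gamma_0}$ is identified, via the snake lemma applied to multiplication by $T$, with $\{x \in X : Tx \in \omega_n X + \nu_n Y\}/(\omega_n X + \nu_n Y)$.

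For the direction (b) $\Rightarrow$ (a), I would argue that if $X$ is finite then the ideal $(\omega_n) \subset \Lambda$ annihilates $X$ for all sufficiently large $n$, so $A_n$ stabilizes to $X/\nu_n Y$; a direct verification then shows that the $\Gamma$-fixed part of this quotient is generated precisely by the inertia contributions, giving $B_n = D_n$ for $n \gg 0$. For the harder converse (a) $\Rightarrow$ (b), I would argue by contrapositive: if $X$ is infinite, the structure theorem produces a pseudo-decomposition of $X$ with at least one nontrivial factor $\Lambda/(p^a)$ or $\Lambda/(f(T)^e)$ (with $f$ a distinguished polynomial coprime to all $\omega_n$). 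Such a factor contributes to $(X/\omega_n X)^\Gamma$ a quotient whose $\Z_p$- or $\F_p$-rank tends to infinity with $n$, producing fixed classes in $B_n$ that cannot be absorbed into the finitely generated $\Lambda$-module $Y + \Z_p\sigma_1$, contradicting (a).

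The main obstacle, I expect, is the explicit identification of $D_n$ inside the quotient $X/(\omega_n X + \nu_n Y)$ via the Artin reciprocity map, matching the class of each prime $\PP_{i,n}$ in $A_n$ with the chosen inertia generator $\sigma_i$ modulo the relations already factored out. The multi-prime case is delicate because the elements $a_i$ intertwine distinct inertia subgroups, so one must carefully track how $\sigma_i$ acts once we pass to the abelianization $X$. Once this identification is in place, the rest of the proof reduces to $\Lambda$-module structure theory in a manner parallel to Greenberg's treatment of the cyclotomic $\Z_p$-extension of a totally real field in which $p$ splits.
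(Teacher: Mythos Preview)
Your approach through the Iwasawa module $X=\varprojlim A_n$ is genuinely different from the paper's, which argues directly with ambiguous and strongly ambiguous classes via Chevalley's formula; but both directions of your sketch have real gaps. For (a) $\Rightarrow$ (b), your contrapositive cannot work as written: you assert that an infinite factor of $X$ forces the $\Gamma$-fixed part of $X/\omega_n X$ (hence $B_n$) to have rank growing in $n$, yet for $X=\Lambda/(p)$ one has $X/\omega_n X\cong \F_p[T]/(T^{p^n})$, whose $T$-torsion is one-dimensional for every $n$. More decisively, Proposition~\ref{B bounded} already shows $|B_n|$ is bounded under Leopoldt, so no argument concluding that $B_n$ grows can succeed. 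The paper's argument for this direction is instead a short fixed-point trick: $\Nrm_{m,n}\colon D_m\to D_n$ is surjective and $|B_n|$ is bounded, so $B_n=D_n$ makes $\Nrm_{m,n}|_{B_m}$ an isomorphism for large $m\ge n$; then $\ker(\Nrm_{m,n}\colon A_m\to A_n)\cap B_m=0$, and a nontrivial $p$-group acted on by a $p$-group has a nontrivial fixed point, forcing this kernel to vanish and $|A_n|$ to stabilize.

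For (b) $\Rightarrow$ (a), your ``direct verification'' is exactly the hard step, and it is where all three hypotheses enter. Saying that the $\Gamma$-fixed part of $A_n$ is generated by inertia amounts to $B_n=\Am_{st}(K_n/K)[p^\infty]$ for large $n$, i.e.\ every ambiguous class is strongly ambiguous. The paper achieves this by taking $c\in B_n$, lifting to $c'\in B_m$, writing a representative $J^{\sigma-1}=(\beta)$ with $\varepsilon=\Nrm_{m,0}(\beta)\in\mO_K^\ast$; the CM structure gives $\varepsilon^2=\varepsilon'\varepsilon''$ with $\varepsilon''\in\mO_{K^+}^\ast$, the complete splitting of $p$ gives $K_{\PP_i}\cong K^+_{\mP_i}\cong K_{\tilde{\PP}_i}\cong\Q_p$ so the local $p^m$-th power condition at $\PP_i$ transfers to $\tilde{\PP}_i$, and Leopoldt then upgrades ``$\varepsilon$ is a local $p^m$-th power at every prime above $p$'' to ``$\varepsilon=\eta^{p^n}$ globally,'' after which Hilbert~90 produces a $\sigma$-invariant representative of $c$. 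None of this arithmetic is encoded in the $\Lambda$-module data of $X$ and $Y$ alone, so your plan does not explain how the hypotheses would be used.
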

This is Theorem \ref{TH4.1} in Section \ref{sec3}. In fact, the above two theorems are analogous to Greenberg's results \cite{MR0401702} for the cyclotomic $\Z_p$-extension of a totally real field, and so we follow Greenberg's proofs. 

Let $E(L):=\mO_L^*$ be the group of units of $\mO_L$ for a number field $L$. Let $K$ be a CM field. We next compare $S$-ramified $\Z_p$-extension $K_\infty/K$ and cyclotomic $\Z_p$-extension $K^{+}_\infty/K^+$. Let $K_n$ be the $n$th layer of $K_\infty/K$ and $K_n^+$ be the $n$th layer of $K^{+}_\infty/K^+$. Let $\Nrm_{n}$ be the norm map from field $K_n$ to $K$ or $K_n^+$ to $K^+$.
\begin{proposition}
Assume $p$ splits completely in $K/\Q$ and Leopoldt's conjecture holds for $K$. 
\[
 E(K)/( \Nrm_n(K_n^*) \cap E(K))  \cong  E(K^+) /(\Nrm_n((K^+_n)^*) \cap E(K^+) )
 \]    
\end{proposition}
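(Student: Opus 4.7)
My plan is to reduce both sides to local norm conditions via Hasse's norm theorem, identify the local norm groups at $\PP_i$ and $\mP_i$ using Leopoldt's conjecture, and then compare images in a common target via the classical CM relation $E(K)^2 \subseteq \mu_K E(K^+)$.

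Since $K_n/K$ is cyclic of $p$-power order and $K$ is CM, Hasse's norm theorem together with surjectivity of norms on local units at finite unramified primes and at (complex) archimedean places gives
\[ E(K) \cap \Nrm_n(K_n^*) = \{u \in E(K) : u \in \Nrm_{\PP_i}((K_n)_{\PP_i}^*) \text{ for each } i\}, \]
and symmetrically for $K^+$ at the $\mP_i$. Because $p$ splits completely in $K$, we have $K_{\PP_i} = K^+_{\mP_i} = \Q_p$, so both quotients in the statement will inject into the same target $\bigoplus_{i=1}^s \Q_p^*/N_i^{\mathrm{loc}}$ once we identify the local norm groups $N_i^{\mathrm{loc}}$.

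The crucial step is this local identification. By local class field theory, the local $\Z_p$-extension at $\PP_i$ is encoded in an inertia character $\iota_i^K : 1+p\Z_p \to \Gal(K_\infty/K) \cong \Z_p$, and the tuple $(\iota_i^K)$ is cut out by the global reciprocity relation $\sum_i \iota_i^K(u|_{\PP_i}) = 0$ for every $u \in E(K)$ (the contributions at the $\tilde{\PP}_i$ vanish since $K_\infty/K$ is unramified outside $S$; the archimedean and non-$p$-adic contributions vanish as above). The cyclotomic inertia characters $(\iota_i^{K^+})$ for $K_\infty^+/K^+$ satisfy the analogous relation on $E(K^+)$. Now under our hypotheses $K \not\supseteq \mu_p$, so $\mu_K$ has order prime to $p$ and hence $E(K)\otimes\Z_p = E(K^+)\otimes\Z_p$; combined with $u|_{\PP_i} = u|_{\mP_i}$ for $u \in E(K^+)$, both $(\iota_i^K)$ and $(\iota_i^{K^+})$ satisfy the very same linear system on $\Z_p^s$. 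Leopoldt's conjecture forces its solution space to be one-dimensional over $\Z_p$, so $(\iota_i^K)$ and $(\iota_i^{K^+})$ agree up to a single $\Z_p^*$-scalar, yielding the same local $\Z_p$-extension of $\Q_p$ and hence the same $n$-th layer local norm groups $N_i^{\mathrm{loc}}$.

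With the common target in place, $\im \phi_{K^+} \subseteq \im \phi_K$ is immediate from $E(K^+) \subseteq E(K)$. For the reverse, given $u \in E(K)$, the classical CM index bound $[E(K):\mu_K E(K^+)] \leq 2$ lets us write $u^2 = \zeta v$ with $\zeta \in \mu_K$ and $v \in E(K^+)$. Since $\zeta$ has order prime to $p$, $\phi_K(\zeta) = 0$, so $2\phi_K(u) = \phi_K(u^2) = \phi_{K^+}(v) \in \im \phi_{K^+}$. As $p$ is odd, $2$ is invertible in the target $(\Z/p^n)^s$, so $\phi_K(u) \in \im \phi_{K^+}$. Thus $\im \phi_K = \im \phi_{K^+}$, giving the desired isomorphism. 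The main obstacle is the local identification in the middle paragraph: rigorously executing the Leopoldt-kernel comparison requires unpacking the class field theory description of the $S$-ramified $\Z_p$-extension furnished by Theorem~\ref{thrm1.1} and tracking the inertia characters carefully, which is precisely where the hypotheses on full splitting and Leopoldt do their real work.
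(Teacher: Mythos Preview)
Your argument is correct, but the middle paragraph works much harder than necessary. The paper's route for the local identification is elementary: its Lemma~\ref{LCFT} observes that for \emph{any} totally ramified cyclic $p^n$-extension of $\Q_p$, the norm group intersected with the local units is always $\mu_{p-1}\times(1+p^{n+1}\Z_p)$. Thus a global unit $\gamma$ lies in $\Nrm_n(K_n^*)$ if and only if $\gamma^{p-1}\equiv 1\pmod{\PP_i^{n+1}}$ for each $\PP_i\in S$, and the identical criterion characterizes $\Nrm_n((K^+_n)^*)\cap E(K^+)$ at the $\mP_i$. Since $K_{\PP_i}=K^+_{\mP_i}=\Q_p$ and $\PP_i\cap\mO_{K^+}=\mP_i$, these two congruence conditions on an element of $E(K^+)$ are literally the same. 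No Leopoldt, no inertia characters.

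Your approach instead shows that the inertia tuples $(\iota_i^K)$ and $(\iota_i^{K^+})$ both lie in the one-dimensional annihilator (under Leopoldt) of the image of $E(K^+)\otimes\Z_p$ in $\prod_i(1+p\Z_p)$, hence are proportional. This is valid and even has the mild advantage of not leaning on the totally-ramified hypothesis; but you should close one gap: from ``solution space one-dimensional'' you jump to ``agree up to a $\Z_p^*$-scalar,'' which needs both tuples to be primitive. This holds because each extension corresponds to a \emph{surjection} $U_1/\overline{E_1}\twoheadrightarrow\Z_p$ and, under Leopoldt, the source has $\Z_p$-rank one, so any such surjection is a generator of the dual line. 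Your final step comparing images via $u^2=\zeta v$ with $\zeta\in\mu_K$ of prime-to-$p$ order is exactly the paper's argument, just phrased in terms of a common target rather than the map $E(K^+)\to E(K)/H_n(K)$.
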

This is Proposition \ref{P5.5} in Section \ref{sec4}. The proposition is interesting because the extensions $K_n/K$ and $K_n^+/K^+$ are globally unrelated, but locally similar.

There are various kinds of numerical criterion to determine when $\mu=\lambda=0$ for the cyclotomic $\Z_p$-extension of a real quadratic field. We give a similar numerical criterion to that of Fukuda and Komatsu \cite{MR0845905} for the $S$-ramified $\Z_p$-extension of an imaginary biquadratic field.
Let $m,d \in \Z^+$ that are squarefree and coprime.  Denote $k = \Q(\sqrt{-m})$, $F = \Q(\sqrt{d})$, $K = Fk$, and $\vep$ to be the fundamental unit for $K$.  Suppose that $p$ splits completely in $K$, with $p\mO_k = \p\tilde{\p}$ and $\p\mO_{K} = \PP\bPP$.  Take $S=\{\PP,\bPP\}$.  The following is Theorem \ref{TH5.7} in Section \ref{sec4}.
\begin{theorem}
 Suppose $p$ doesn't divide the class number of $K$, and that $r$ is the smallest positive integer such that 
 \[
 \vep^{p-1}\equiv 1 \Mod{\bPP^r}.
 \]
 If $\Nrm_{r-1}(E_{r-1}) = E_0$, then $\mu=\lambda = 0$ for the $S$-ramified $\Z_p$-extension of $K_\infty/K$.
 \end{theorem}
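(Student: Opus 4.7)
The plan is to invoke Theorem \ref{TH4.1}, which under our hypotheses reduces $\mu=\lambda=0$ to showing $B_n=D_n$ for all sufficiently large $n$. Its hypotheses are satisfied: $p$ splits completely in $K^+ = F$ because $p$ splits completely in $K$, and Leopoldt's conjecture for $K$ holds automatically because $K/\Q$ is abelian. The assumption $p \nmid h_K$ furthermore yields $A_0 = 0$, so in particular $\PP$ and $\bPP$ are principal in $K$, which will streamline the intermediate computations.

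The first step is to control $|B_n|$ via the ambiguous class number formula for the cyclic extension $K_n/K$. Only $\PP$ and $\bPP$ ramify, both totally, and with $A_0 = 0$ the formula reads
\[
|B_n| = \frac{p^n}{[E(K):E(K)\cap \Nrm_{K_n/K}(K_n^*)]_p}.
\]
The denominator is computed locally by the Hasse norm theorem. Since $K_\bPP \cong \Q_p$ and the localization of the $S$-ramified $\Z_p$-extension at $\bPP$ is the unique totally ramified $\Z_p$-extension of $\Q_p$ (the local cyclotomic one), local class field theory identifies the local norm group at level $n$ as $\mu_{p-1}\cdot(1+p^{n+1}\Z_p)$. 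Interpreting the hypothesis on $r$ as $v_\bPP(\vep^{p-1}-1) = r$, so that $\vep \in \mu_{p-1}(1+p^r\Z_p)$ locally, I find that $\vep$ is a local norm at $\bPP$ from $K_{\bPP, n}$ precisely when $n \leq r-1$. A symmetric analysis at $\PP$, using that $\sigma\PP = \bPP$ for the nontrivial $\sigma \in \Gal(K/k)$ and that $\sigma\vep = \zeta\vep^{\pm 1}$ for some root of unity $\zeta$, gives the same threshold. Consequently the denominator equals $p^{\max(0, n+1-r)}$, and $|B_n| = p^{\min(n, r-1)}$ is eventually constant at $p^{r-1}$.

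The second step uses the hypothesis $\Nrm_{r-1}(E_{r-1}) = E_0$ to establish $B_{r-1} = D_{r-1}$. A standard genus-theoretic exact sequence, available because $A_0 = 0$, yields an injection $B_n/D_n \hookrightarrow E(K)/\Nrm_{K_n/K}(E_n)$; the hypothesis makes the right-hand side vanish at $n = r-1$, so $B_{r-1} = D_{r-1}$ with order $p^{r-1}$. To propagate to all $n \geq r-1$, I would invoke Fukuda's stabilization lemma for $\Z_p$-extensions with at least one totally ramified prime: once $|A_n|$ is stationary at some level, it remains stationary for all higher $n$. Combined with the stable bound $|B_n| = p^{r-1}$ and the fact that $B_n$ is the $\Gal(K_n/K)$-fixed subgroup of the $p$-group $A_n$, this forces $|A_n|$ to be bounded, so Theorem \ref{TH4.1} yields $\mu = \lambda = 0$.

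The main obstacle is the propagation step just sketched. For $n \geq r$ the unit $\vep$ is no longer a local norm at $\bPP$, so the target of the injection in the second step is potentially nonzero and one cannot repeat the $n = r-1$ argument verbatim. The resolution, mirroring Fukuda--Komatsu's strategy in the real quadratic case, combines the stabilization of $|B_n|$ with Fukuda's monotonicity for $|A_n|$ to force simultaneous stabilization of the ambiguous classes; one then verifies directly that $[\PP_n]$ and $[\bPP_n]$ generate all of $B_n$ at the stabilized levels. Executing this verification carefully — and in particular tracking how the orders of these ramified-prime classes interact with capitulation in the tower — is the technical heart of the proof.
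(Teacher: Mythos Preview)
Your first two steps are fine: the computation $|B_n| = p^{\min(n,r-1)}$ via the ambiguous class formula is correct, and the hypothesis $\Nrm_{r-1}(E_{r-1}) = E_0$ does give $B_{r-1} = D_{r-1}$ as you argue. The gap is in the propagation step. Fukuda's stabilization lemma needs $|A_{n_0}| = |A_{n_0+1}|$ at some level as \emph{input}; the boundedness of $|B_n|$ alone (which is just Proposition~\ref{B bounded} and holds in general under Leopoldt) does not supply this, so the sentence ``this forces $|A_n|$ to be bounded'' is unjustified. Your closing paragraph acknowledges the difficulty but the sketch there is circular --- ``verifying that $[\PP_n],[\bPP_n]$ generate all of $B_n$'' is exactly the assertion $D_n = B_n$ you are trying to establish.

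The paper closes this gap with a one-line observation you are missing: if $\beta \in E_{r-1}$ has $\Nrm_{r-1}(\beta) = \vep$, then regarding $\beta \in E_n$ for any $n \geq r-1$ gives $\Nrm_n(\beta) = \vep^{p^{n-r+1}}$, so $[E_0:\Nrm_n(E_n)] \leq p^{n-r+1}$ for \emph{every} $n \geq r-1$. Since $[E_0:E_0\cap\Nrm_n(K_n^*)] = p^{n-r+1}$ (your Step~1, or equivalently Proposition~\ref{P5.5} together with Fukuda--Komatsu's Lemma~2), the two indices agree, and Chevalley's pair of formulas yields $|B_n| = |\Am_{st}(K_n/K)[p^\infty]| = |D_n|$ for all $n \geq r-1$; Theorem~\ref{TH4.1} then finishes. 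Incidentally, your own route can be salvaged without this trick: once $|D_{r-1}| = |B_{r-1}| = p^{r-1}$, surjectivity of $\Nrm_{n,r-1}\colon D_n \to D_{r-1}$ (the primes in $S$ are totally ramified, so $\Nrm_{n,r-1}(\PP_n) = \PP_{r-1}$, etc.) forces $|D_n| \geq p^{r-1} = |B_n|$ for all $n \geq r-1$, hence $D_n = B_n$ directly --- no appeal to stabilization of $A_n$ is needed.
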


The structure of this paper is as follows: In Section \ref{sec1}, we prove Theorem \ref{unique Zp}. In Section \ref{general property}, we prove some properties for $S$-ramified $\Z_p$-extension and introduce some assumptions. In Section \ref{sec2}, we deal with the case when $p$ is inert in $K^+/\Q$. In Section \ref{sec3}, we deal with the case when $p$ splits completely in $K^+/\Q$. In Section \ref{sec4}, we compare ambiguous class group between $S$-ramified $\Z_p$-extension of $K$ and cyclotomic $\Z_p$-extension of $K^+$. We also give a numerical criterion for $\mu=\lambda=0$ for biquadratic fields at the end of the section.

\subsection{Potential Directions}
Here we remark on some possible future directions one might take in studying Conjecture \ref{conjecture}:

\begin{enumerate}[(i)]
    \item There are many other criterion developed to study Greenberg's conjecture, in particular, criterion relating to real quadratic fields \cite{MR1373702}\cite{MR0845905}. One may try to generalize these criterion to study Conjecture \ref{conjecture} in the biquadratic field case.  
    
 \item  Fukuda and Komatsu  \cite{MR1969639} \cite{MR3240810} only calculate the examples for the $S$-ramified $\Z_p$-extension of $K_\infty/K$ defined in Theorem \ref{unique Zp} for imaginary quadratic fields $K$.  All calculated examples have $\lambda=0$. Is it possible to calculate more examples for the general CM field $K$?

 \item  In \cite{MR0880470} and \cite{MR0871166}, the authors proved that $\mu=0$ for such $S$-ramified $\Z_p$-extension of $K_\infty/K$ when $K$ is imaginary quadratic field. Ferrero-Washington \cite{MR0528968} proved that $\mu=0$ for the cyclotomic $\Z_p$-extension of an abelian number field. Could one adapt their argument to prove that $\mu=0$ for $S$-ramified $\Z_p$-extension when the CM field $K$ is abelian?

\end{enumerate}

\subsection{Another Potential Analogy}

Finally, we present another similarity between the $\Z_p$-extensions of totally real fields, and the $S$-ramified $\Z_p$-extensions of an imaginary quadratic field. Let $F$ be a totally real field and $F_{\infty}/F$ be the cyclotomic $\Z_p$-extension of $F$. Let $F_n$ be the $n$-th layer of $F_\infty/F$. Let $M$ be the maximal pro-$p$ abelian extension of $F$ unramified outside primes above $p$. We know that $\Gal(M/F_\infty)$  is a finite group if Leopoldt's conjecture holds. 

We say $a\sim b$ if two numbers $a,b$ have the same $p$ adic valuation. The following theorem can be found in the appendix of \cite{MR0460282}:
\begin{theorem}[Coates \cite{MR0460282}]\label{coates}
  Under the assumption of Leopoldt’s conjecture, we have
  \[
  \#\Gal(M/F_\infty)\sim
\frac{w_1(F(\mu_p))h_FR_p(F)\prod_{\mP\mid p}(1-(\Nrm\mP)^{-1})}{\sqrt{\Delta_{F/\Q}}}
\]
Here $\mu_p$ is the group of $p$th roots of unity, $w_1(F(\mu_p))$ is the number of roots of unity of $F(\mu_p)$, $h_F$ is the class number of $F$, $R_p(F)$ is the $p$-adic regulator of $F$, $\Nrm\mP$ is the absolute norm of $\mP$, and  $\Delta_{F/\Q}$ is the discriminant of $F$.  
\end{theorem}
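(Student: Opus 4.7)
The plan is to derive the formula by translating the class field theory description of $\Gal(M/F)$ into a lattice computation via the $p$-adic logarithm. Class field theory applied to the maximal abelian pro-$p$ extension $M/F$ unramified outside $p$ gives an exact sequence
\[
1 \longrightarrow U_p / \overline{\iota(E_F)} \longrightarrow \Gal(M/F) \longrightarrow A_F \longrightarrow 1,
\]
where $U_p = \prod_{\mP\mid p}(\mO_{F_{\mP}}^{*} \otimes_{\Z} \Z_p)$ is the product of pro-$p$ completions of the local units above $p$, $\iota$ is the diagonal embedding of the global units $E_F$ into $U_p$, and $A_F$ is the $p$-Sylow subgroup of $\Cl_F$. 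Under Leopoldt's conjecture, $\iota$ is injective with image of $\Z_p$-rank $r_1-1$, so $U_p/\overline{\iota(E_F)}$ has $\Z_p$-rank $1$, and hence so does $\Gal(M/F)$. Since $F$ is totally real, Leopoldt also ensures that the cyclotomic $\Z_p$-extension is the only $\Z_p$-extension of $F$, so $\Gal(M/F_\infty)$ coincides with the $\Z_p$-torsion of $\Gal(M/F)$, which is the finite group whose order we must compute.

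Next I would evaluate this torsion using the $p$-adic logarithm $\log_p \colon U_p \to F \otimes_\Q \Q_p$. The kernel of $\log_p$ on $U_p$ is the group of $p$-power roots of unity in $F \otimes_\Q \Q_p$, whose order matches $w_1(F(\mu_p))$ up to factors prime to $p$; this accounts for the $w_1$ factor. The image $\log_p(U_p)$ is a finite-index sublattice of $\mO_F \otimes \Z_p = \prod_{\mP} \mO_{F_\mP}$; computing the index prime by prime from the local exact sequence $1 \to U_\mP^{(1)} \to \mO_{F_\mP}^{*} \to k(\mP)^{*} \to 1$, together with the standard description of $\log_p(U_\mP^{(1)})$, yields exactly the Euler product $\prod_{\mP\mid p}(1-\Nrm\mP^{-1})$ up to $p$-adic units. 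Finally, $\log_p(\iota(E_F))$ is a rank-$(r_1-1)$ lattice inside the trace-zero hyperplane of $F \otimes_\Q \Q_p$, and by definition its covolume, normalized against the lattice $\mO_F$ of Archimedean covolume $\sqrt{|\Delta_{F/\Q}|}$, is the $p$-adic regulator $R_p(F)$.

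Combining these contributions with the class-group factor $|A_F| = p^{v_p(h_F)}$ coming from the right-hand end of the exact sequence produces the claimed $p$-adic mass formula. The main obstacle is a careful diagram chase across the non-split exact sequence: one must check that the order of the torsion of $\Gal(M/F)$ factors correctly as the product of the torsion of $U_p/\overline{\iota(E_F)}$ and $|A_F|$, which requires identifying how the rank-$1$ free quotient of $\Gal(M/F)$ matches the cyclotomic projection $\Gal(M/F)\twoheadrightarrow \Gal(F_\infty/F)$. One uses $F_\infty \cap H_F = F$ (with $H_F$ the $p$-Hilbert class field, which follows from the ramification of $F_\infty/F$ at some prime above $p$) to separate the two contributions cleanly. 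Morally this is a $p$-adic Minkowski volume computation, with $\sqrt{|\Delta_{F/\Q}|}$ serving as the covolume of the ambient lattice and $R_p(F)$ as the covolume of its unit sublattice, so that the final identity reads as the $p$-adic analogue of Dirichlet's class number formula.
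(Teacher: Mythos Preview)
The paper does not prove this theorem; it is quoted from the appendix of Coates \cite{MR0460282} purely to motivate the analogy with Theorem \ref{coates and wiles}, and no argument is supplied. So there is nothing in the paper to compare your proposal against.

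That said, your outline is the standard route and is essentially what Coates does: realize $\Gal(M/F)$ id\`ele-theoretically, identify $\Gal(M/F_\infty)$ with its $\Z_p$-torsion under Leopoldt, and then unwind the torsion via the $p$-adic logarithm to isolate the contributions $h_F$, $R_p(F)$, the Euler factors, and the root-of-unity term. Two small caveats. First, your attribution of $w_1(F(\mu_p))$ to ``the kernel of $\log_p$ on $U_p$'' is not quite right as stated: $F$ is totally real, so it has no nontrivial $p$-power roots of unity globally, and the local $p$-power roots of unity in the various $F_{\mP}$ are already entangled with the Euler factors $(1-\Nrm\mP^{-1})$; the appearance of $w_1(F(\mu_p))$ in Coates' normalization comes out of a more careful bookkeeping of these local torsion pieces together with the normalization of $R_p(F)$. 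Second, the claim $F_\infty\cap H_F=F$ is correct (the cyclotomic $\Z_p$-extension is ramified above $p$ while $H_F/F$ is everywhere unramified), but you do not actually need it to separate the two contributions: since $\Gal(M/F)$ is a finitely generated $\Z_p$-module of rank one, its torsion subgroup is a direct summand, and the order of the torsion is simply $|A_F|$ times the order of the torsion of $U_p/\overline{\iota(E_F)}$, read off from the exact sequence. With those adjustments your sketch is sound.
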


Now, let $F$ be a totally real field with $[F:\Q] = d$, $k$ an imaginary quadratic field, and $p\geq 3$ a prime that splits as $p\mO_k = \p\tilde{\p}$.  Assume $p$ doesn't divide the class number of $k$. Let $k \subseteq k_1 \subseteq \dots \subseteq k_{\infty}$ be the unique non-cyclotomic $\Z_p$-extension of $k$ unramified outside $\p$. Denote $K = kF$ and $K\cap k_\infty=k_e$.  Define  $K_n = k_{n+e}F$, $K_\infty=k_\infty F$. Hence, $K_n$ is the $n$-th layer of the $\Z_p$-extension $K_\infty/K$. We know that $K_\infty/K$ is the $S$-ramified $\Z_p$-extension, where $S$ is the set of prime above $\p$ in $K$.

Let $M$ be the maximal  pro-$p$ abelian extension of $K$ unramified outside $S$. 
\begin{theorem}[Coates and Wiles \cite{MR0450241}]\label{coates and wiles}
   Under the assumption of Leopoldt's conjecture, we have
  \[
  \#\Gal(M/K_\infty)\sim \frac{p^{e+1}h_KR_p(K)\prod_{\PP\mid\p}(1-(\Nrm\PP)^{-1})}{\nu_K\sqrt{\Delta_{K/k}}}
  \]  
  Here $e$ is an integer defined by $ K\cap k_\infty=k_e$, $h_K$ is the class number of $K$, $R_p(K)$ is the $p$-adic regulator of $K$, $\Nrm\PP$ is the absolute norm of $\PP$, $\nu_K$ is the order of the group of $p$ power root of unity in $K$ and $\Delta_{K/k}$ is the relative discriminant of $K$ over $k$.
\end{theorem}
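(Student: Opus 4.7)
I would mirror the strategy that yields Coates's formula (Theorem~\ref{coates}), adapted to the $S$-ramified CM setting. First, I would invoke class field theory to realize $\Gal(M/K)$ via the exact sequence
\[
E_K\otimes\Z_p \xrightarrow{\iota} \prod_{\PP\in S} U_{\PP}^{(1)} \to \Gal(M/K) \to A' \to 0,
\]
where $U_{\PP}^{(1)}$ is the group of principal local units at $\PP$, $E_K=\mO_K^{\times}$, and $A'$ denotes the $p$-primary part of $\Cl(K)/\langle S\rangle$. Since $K$ is CM of degree $2d$, the Dirichlet rank of $E_K$ is $d-1$, while the splitting hypothesis on primes above $p$ in $K^+=F$ forces $\prod_{\PP\in S}U_{\PP}^{(1)}$ to have $\Z_p$-rank $d$. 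Invoking Leopoldt for $K$ gives injectivity of $\iota$ on the free part, so the cokernel of $\iota$ has $\Z_p$-rank exactly one; that free rank is absorbed by $\Gamma=\Gal(K_\infty/K)$, leaving $\Gal(M/K_\infty)$ finite and equal to $\#A'$ times the torsion of $\mathrm{coker}(\iota)/\Gamma$.

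Next I would access that torsion through the $p$-adic logarithm $\log_p:\prod_{\PP\in S}U_{\PP}^{(1)}\to\prod_{\PP\in S}K_{\PP}$, which for odd $p$ identifies each $U_{\PP}^{(1)}$ with a $\Z_p$-lattice in $K_\PP$ up to torsion of total order $\nu_K$. The covolume of $\log_p(E_K)$ inside the $d$-dimensional $\Q_p$-space $\prod_{\PP\in S}K_{\PP}$, after quotienting by the $\Gamma$-direction, is by definition the $p$-adic regulator $R_p(K)$, producing that factor in the numerator. The Euler factors $\prod_{\PP\mid\p}(1-\Nrm\PP^{-1})$ arise as the change-of-measure between the lattice $\log_p(U_{\PP}^{(1)})$ and $\mO_{K_\PP}$ at each $\PP$ (since for unramified $\PP\mid p$ the index is $\Nrm\PP$, whose $p$-adic value matches $(1-\Nrm\PP^{-1})^{-1}$). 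The class number $h_K$ enters through $\#A'$, with the hypothesis $p\nmid h_k$ keeping the contribution of $\langle S\rangle$ to $\Cl(K)$ well-controlled.

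The hard part will be pinning down the precise normalizations in the denominator, especially the $\sqrt{\Delta_{K/k}}$ factor and the exact power $p^{e+1}$. The relative discriminant $\Delta_{K/k}$ (rather than the absolute $\Delta_{K/\Q}$) appears when one compares a $\Z_p$-basis of $\mO_K\otimes\Z_p$ against a ``local'' basis adapted to $\prod_{\PP\in S}K_{\PP}$, reflecting that the ambient $\Z_p$-extension descends from $k_\infty$ rather than $\Q_\infty$. The factor $p^{e+1}$ encodes the shift from $K\cap k_\infty=k_e$ together with the $\Gamma$-absorbed factor of $p$, while $\nu_K$ records the $\log_p$-torsion in the principal local units. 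In~\cite{MR0450241} Coates and Wiles achieve these normalizations through their theory of elliptic units and the Katz $p$-adic $L$-function; a more modern alternative would be to invoke Colmez's residue formula for the $p$-adic zeta function of $K$ at $s=1$, which packages the above factors directly into the $p$-adic analogue of the analytic class number formula.
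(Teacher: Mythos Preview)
The paper does not prove this statement at all: Theorem~\ref{coates and wiles} is simply quoted from Coates--Wiles \cite{MR0450241} as background motivation in the introduction, with no argument given beyond the attribution and two brief remarks. So there is no ``paper's own proof'' to compare your proposal against.

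That said, your sketch is a reasonable outline of the idelic/class-field-theory approach underlying such formulas, and it correctly identifies the main ingredients (the diagonal embedding of global units into local principal units, Leopoldt giving corank one, the $p$-adic logarithm producing the regulator, and the local Euler factors from the index computation). But as you yourself acknowledge, the normalization details---the precise powers of $p$, the appearance of $\sqrt{\Delta_{K/k}}$ rather than an absolute discriminant, and the factor $p^{e+1}$---are exactly where the work lies, and your plan defers these either to \cite{MR0450241} itself or to Colmez's later residue formula. So this is more a roadmap than a proof: it would not stand on its own without either reproducing the elliptic-unit computations of Coates--Wiles or carefully invoking and unpacking Colmez's result. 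For the purposes of this paper, a citation is all that is needed, and that is what the authors provide.
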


\begin{remark}
    The value on the right-hand side of Theorems \ref{coates} and \ref{coates and wiles} can be interpreted as a $p$-adic residue for the $p$-adic $L$ function corresponding to the characteristic polynomial of Iwasawa module $X_\infty$. See the appendix of \cite{MR0460282}.
\end{remark}
\begin{remark}
  Coates and Wiles proved Theorem \ref{coates and wiles} with a different motivation. Their method of proof was also used in their paper on the conjecture of Birch and Swinnerton-Dyer \cite{MR0463176}.  
\end{remark}

One may wonder if an analogous result to Theorem \ref{coates and wiles} for the $S$-ramified $\Z_p$-extension of a CM field exists.

\section{Uniqueness of $S$-Ramified $\Z_p$-Extensions}\label{sec1}
Let $K$ be a CM field and $K^+$ be the maximal subfield fixed by complex conjugation. Let $S^+ = \{\mP_1,\mP_2,\cdots,\mP_s\}$ be the set of primes of $K^+$ above $p$. Assume that each of the primes above $p$ in $K^+$ split in $K$. Write $\mP_i\mO_K=\PP_i\tilde{\PP}_i$ for $1\leq i\leq s$, where $\tilde{\PP}_i$ is the complex conjugation of $\PP_i$, and set $S = \{\PP_1, \PP_2, \dots, \PP_s\}$. The following theorem can be viewed as the analogue of the fact that there is a unique $\Z_p$-extension of a totally real field for which Leopoldt's conjecture holds. The method of the proof is similar to the proof of Theorem 13.4 in Washington \cite{MR1421575}.

\begin{theorem}\label{unique Zp}
   Let $T$ be the maximum abelian extension of $K$ unramified outside $S$.  Then there is a surjective homomorphism $\Gal(T/K) \to \Z_p^{1 + \delta}$ with finite kernel, where $\delta$ is the Leopoldt defect (see \cite[Theorem 13.4 page 266]{MR1421575}).  In particular, if Leopolt's conjecture holds for $K$, then $\delta = 0$ and there is a unique $\Z_p$-extension contained in $T$.
\end{theorem}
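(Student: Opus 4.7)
The plan is to follow the idelic class field theory approach of Washington's Theorem 13.4, adapted to exploit the CM structure since $S$ contains only half of the primes of $K$ above $p$. Class field theory identifies $\Gal(T/K)$ with $J_K/\overline{K^\ast \cdot W}$, where $J_K$ is the idele group and $W = \prod_{v \notin S,\, v \text{ finite}} U_v \times \prod_{v \mid \infty} K_v^\ast$ (using that $K$ is CM, so all archimedean places are complex). Taking the pro-$p$ part (the prime-to-$p$ part is finite, arising from the residue-field torsion in the $U_\PP$ for $\PP \in S$ and from the prime-to-$p$ part of the class group) and filtering via the class-number sequence yields an exact sequence of $\Z_p$-modules
\[
0 \to \frac{\prod_{\PP \in S} U_\PP^{(1)}}{\overline{\iota(E(K))}} \to X \to \Cl(K)_p \to 0,
\]
where $X$ is the pro-$p$ completion of $\Gal(T/K)$, $U_\PP^{(1)}$ denotes the principal pro-$p$ local units, and $\iota$ is the $p$-adic closure of the diagonal embedding of the global units $E(K)$. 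Since $\Cl(K)_p$ is finite, it suffices to compute the $\Z_p$-rank of the left-hand quotient.

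For the numerator, the splitting hypothesis $\mP_i \mO_K = \PP_i \tilde{\PP}_i$ gives $K_{\PP_i} \cong K^+_{\mP_i}$ as local fields, so
\[
\text{rank}_{\Z_p}\prod_{\PP \in S} U_\PP^{(1)} \;=\; \sum_{i=1}^s [K^+_{\mP_i}:\Q_p] \;=\; [K^+:\Q].
\]
For the denominator, the CM structure gives $[E(K) : \mu(K)\, E(K^+)] \leq 2$, so modulo torsion $E(K) \otimes \Z_p \cong E(K^+) \otimes \Z_p$, both of $\Z$-rank $[K^+:\Q] - 1$. Under the local identification $K_{\PP_i} \cong K^+_{\mP_i}$, the restriction of $\iota$ to the $S$-factors is canonically the usual Leopoldt map
\[
E(K^+) \otimes \Z_p \;\longrightarrow\; \prod_{\mP \mid p \text{ in } K^+} U_\mP^{(1)},
\]
whose image has $\Z_p$-rank $[K^+:\Q] - 1 - \delta$ by definition of the Leopoldt defect. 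Moreover one checks $\delta(K) = \delta(K^+)$ in this setting: the full Leopoldt map for $K$ sends $u \in E(K^+) \otimes \Z_p$ to the diagonal $(u,u)$ across each conjugate pair $U_{\PP_i} \times U_{\tilde\PP_i}$, whose $\Z_p$-rank equals that of the projection to a single factor. Subtracting ranks, the quotient has $\Z_p$-rank $1 + \delta$, producing the asserted surjection $\Gal(T/K) \twoheadrightarrow \Z_p^{1+\delta}$ with finite kernel.

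The main obstacle is the rank bookkeeping in the denominator step, namely, verifying that the $p$-adic closure of $E(K)$ inside only the $S$-factors has the same $\Z_p$-rank as the full Leopoldt image, and therefore is governed by the standard Leopoldt defect of $K^+$ (equivalently of $K$). One must be explicit about the index $[E(K):\mu(K) E(K^+)] \leq 2$, about how complex conjugation swaps $U_{\PP_i}$ with $U_{\tilde\PP_i}$, and about the fact that the image $E(K^+) \otimes \Z_p$ sits diagonally across the pairs. Once this identification is made, the conclusion is formal: when Leopoldt's conjecture holds for $K$, $\delta = 0$, so $\Gal(T/K)$ is a $\Z_p$-module of rank one modulo finite torsion, which admits a unique $\Z_p$-quotient and hence a unique $\Z_p$-extension of $K$ unramified outside $S$.
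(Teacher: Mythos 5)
Your proposal is correct and follows essentially the same route as the paper: both adapt Washington's Theorem 13.4 via the idelic description of $\Gal(T/K)$, reduce modulo the class group and finite torsion to the quotient of the principal local units at $S$ (of $\Z_p$-rank $[K^+:\Q]$, using the splitting $K_{\PP_i}\cong K^+_{\mP_i}$) by the closure of the global units (of rank $[K^+:\Q]-1-\delta$, using $[E(K):\mu(K)E(K^+)]\leq 2$), and conclude rank $1+\delta$. Your explicit check that the closure of $E(K^+)$ in the $S$-factors alone has the same rank as the full Leopoldt image is a point the paper treats more tersely, but the underlying argument is identical.
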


\begin{proof}
   Let $T $ be the maximal abelian extension of $K$ which is unramified outside $S$. Let $\A_K^*$ be the group of id\`eles of $K$.  By class field theory, there is a closed subgroup $R\subset \A_K^*$ such that 
\[\Gal(T/K)\cong \A_K^*/R\]

Let $U_{v}$ be the local unit group at a place $v$ of $K$ and $U_{v}=K_v^*$ if $v$ is an archimedean place. Define

\[
U'=\prod_{i=1}^{s}U_{\PP_i}, \tab  U''=\prod_{v\notin S}U_v, \tab U=U'\times U''
\]
We can view $U'$ as a subgroup of $\A_K^*$ by placing a 1 in each component outside of $S$, and we can view $U''$ as a subgroup of $\A_K^*$ in a similar way. Let $W=\overline{K^*U''}$ be the closure of $K^*U''$ inside $\A_K^*$. Since $T$ is unramified outside $S$,  we have $ W \subset  R$. Since $T$ is maximal, we must have $W=R$. Thus $
\Gal(T/K)\cong\A_K^*/W $. 
Let $H$ be the Hilbert class field of $K$. Then a similar argument shows
\[
\Gal(H/K)=\A_K^*/(K^*U).
\]
Therefore, we have $\Gal(T/H)=K^*U/W=U'W/W\cong U'/(U'\cap W)$. Let $U_{1,\PP_i}$ be the group of local units congruent to 1 modulo $\PP_i$, and put $U_1=\prod_{i=1}^{s}U_{1,\PP_i}$. Then 
\[
U'=U_1\times \text{(finite group)}
\]
Hence
\[
\Gal(T/H)/\text{(finite group)}\cong U_1(U'\cap W)/(U'\cap W)=U_1/(U_1\cap W)
\]
Let $E_1$ be the group of units in $K$ congruent to 1 modulo the primes in $S$. Then we can embed $E_1$ in $\A_K^*$ by
\[
\phi: E_1\hookrightarrow U_1\subset \A_K^*.
\]
In a moment we will prove Lemma \ref{E_1 intersection}, which implies
\[
U_1\cap W=U_1\cap \overline{K^*U''}=\overline{\phi(E_1)}.
\]
Writing $E_1(K^+)$ to be the group of units in $K^+$ congruent to 1 modulo the primes in $S^+$, we have 
\[
E_1(K^+)\subset E_1\subset\mO_K^*.
\]

Since $\rank_{\Z}\mO_K^*=\rank_{\Z} E_1(K^+)=[K:\Q]/2-1$, the index of $E_1(K^+)$ in $E_1$ is finite. Hence, the index of $\overline{E_1(K^+)}$ in $\overline{E_1}$ is finite. Assume that $ \rank_{\Z_p}(\overline{E_1(K^+)})=[K:\Q]/2-1-\delta$ and $\delta\geq 0$. Then
\[
\overline{\phi(E_1)}\cong \Z_p^{[K:\Q]/2-1-\delta} \times \text{(finite group)}.
\]
Recall that \cite[Page 75]{MR1421575} Leopoldt's conjecture predicts that $\delta=0$.
By \cite[Prop 5.7]{MR1421575}, we know
\[
U_1\cong \text{(finite group)}\times \Z_p^{[K:\Q]/2}.
\]

Therefore
\[
U_1/(U_1\cap W)=\text{(finite group)} \times \Z_p^{1+\delta}.
\]
Hence
\[
\Gal(T/H)=\text{(finite group)} \times \Z_p^{1+\delta}.
\]
Since $\Gal(H/K)\cong \Cl(K)$ is a finite group, 
\[
\Gal(T/K)/\Z_p^{1+\delta}\cong\text{(finite group)}.
\]
We will also prove Lemma \ref{quotient finite}, which shows there is a finite group such that 
\[
\Gal(T/K)/\text{(finite group)}\cong \Z_p^{1+\delta}.
\]
  Let $\tilde{K}$ be the compositum of all $\Z_p$-extension of $K$ unramified outside $S$. The fixed field of this finite group must be $\Tilde{K}$ and $\Gal(\Tilde{K}/K)\cong \Z_p^{1+\delta}$. If Leoplodt's conjecture holds for $K$, then Leoplodt's conjecture holds for $K^+$. Hence $\delta=0$, which implies there is a unique $\Z_p$-extension of $K$ unramified outside $S$.

\end{proof}

\begin{lemma}\label{E_1 intersection}
   $U_1\cap W=U_1\cap \overline{K^*U''}=\overline{\phi(E_1)} $
\end{lemma}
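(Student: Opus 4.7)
The plan is to prove the two inclusions separately, with the containment $\overline{\phi(E_1)} \subset U_1 \cap W$ being routine and the reverse containment being the real content. For the easy direction, I would observe that any $\vep \in E_1 \subset \mO_K^*$ is a unit at every finite place, so the idèle $\beta \in \A_K^*$ defined by $\beta_v = \vep^{-1}$ for $v \notin S$ and $\beta_v = 1$ for $v \in S$ lies in $U''$ (the archimedean components are fine because $U_v = K_v^*$ there). Then the diagonal image of $\vep$ in $K^*$ times $\beta$ equals $\phi(\vep)$ (using that $\vep \equiv 1 \Mod{\PP_i}$ to ensure the $\PP_i$-components of $\phi(\vep)$ lie in $U_{1,\PP_i}$). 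Hence $\phi(E_1) \subset U_1 \cap K^*U''$, and taking closures gives $\overline{\phi(E_1)} \subset U_1 \cap \overline{K^*U''}$ since $U_1$ is closed in $\A_K^*$.

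For the hard inclusion, I would fix $u \in U_1 \cap W$ and show that every idèlic neighborhood of $u$ meets $\phi(E_1)$. Working with the restricted-product topology, a basic neighborhood of $1$ has the form $N_0 = \prod_{v \in T} V_v \times \prod_{v \notin T} \mO_{K_v}^*$ for a finite set $T$ containing $S$ and all archimedean places, with $V_v$ a small open neighborhood of $1$ in $K_v^*$. Since $u \in \overline{K^*U''}$, I can pick $\alpha \in K^*$ and $u'' \in U''$ with $\alpha u'' \in u \cdot N_0$. The crucial observation is that $u \in U_1$ has $u_v = 1$ for every $v \notin S$, while $u''$ has unit components at every finite $v$ (including $\tilde{\PP}_i$); hence at every finite $v \notin S$ the condition $(\alpha u'')_v \in \mO_{K_v}^*$ forces $v(\alpha) = 0$, and at $v = \PP_i \in S$ the closeness of $\alpha$ to $u_{\PP_i} \in U_{1,\PP_i}$ also gives $v(\alpha) = 0$. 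So $\alpha \in \mO_K^*$, and by shrinking $V_{\PP_i}$ I can arrange $\alpha \equiv 1 \Mod{\PP_i}$ for each $i$, so $\alpha \in E_1$. Finally $\phi(\alpha)_v = \alpha = u_v \cdot (\alpha/u_v)$ at $v \in S$ lies in $u_v V_{\PP_i}$, while $\phi(\alpha)_v = 1 = u_v$ trivially at $v \notin S$, giving $\phi(\alpha) \in u N_0$.

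The main obstacle is just bookkeeping with the restricted product topology: one has to verify that the constraint imposed by a single neighborhood $N_0$ is strong enough to force the approximating $\alpha$ to be a global unit at \emph{all} (infinitely many) finite places, not merely at those in $T$. The point that makes this work is that the tail condition $(\alpha u'')_v \in \mO_{K_v}^*$ for $v \notin T$ is automatic from $\alpha u'' \in uN_0$ and $u_v = 1$, combined with the fact that $u'' \in U''$ means $(u'')_v$ is a unit at every finite $v$, so no compensation is available and $v(\alpha) = 0$ uniformly in $v \notin T$. Once this is in place the rest of the argument is a direct calculation; no further input from CM structure, Leopoldt's conjecture, or the splitting hypothesis on $p$ is needed for this lemma.
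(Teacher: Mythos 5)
Your proof is correct and follows essentially the same route as the paper's: the easy inclusion via writing $\phi(\vep)=\vep\cdot\bigl(\phi(\vep)/\vep\bigr)$ with the second factor in $U''$, and the hard inclusion by showing that any $\alpha\in K^*$ approximating an element of $U_1$ modulo $U''$ must be a local unit everywhere, hence a global unit lying in $E_1$. The only difference is cosmetic: the paper encodes the approximation as the intersection $\bigcap_n K^*U''U_n$, whereas you work directly with restricted-product neighborhoods, which is if anything slightly more explicit about the id\`ele topology.
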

\begin{proof}
Take $\varepsilon\in E_1$.  Then $\phi(\varepsilon)\in U_1$ and 
\[
\phi(\varepsilon)=\varepsilon \frac{\phi(\varepsilon)}{\varepsilon}.
\]
We have $\varepsilon\in K^*$ and $\phi(\varepsilon)/\varepsilon\in U''$. Hence $\overline{\phi(E_1)}\subset U_1\cap W$.

Recall that in a topological space, the closure generated by a set $V$ is the intersection of closed subsets containing $V$. Define $U_{n,\PP_i}=\{x\in U_{\PP_i}|x\equiv 1 \mod \PP_i^n\}$ and  $U_n=\prod_{i=1}^s U_{n,\PP_i}$. Then
\[
W=\overline{K^*U''}=\bigcap_n K^*U''U_n
\]
\[
\phi(E_1)=\bigcap_n \phi(E_1)U_n 
\]
It suffices to show that 
\[
U_1\cap K^*U''U_n\subset \phi(E_1)U_n 
\]
Take $x\in K^*$, $u''\in U''$ and $u \in U_n $ such that $xu''u\in U_1$. Then $ xu''\in U_1$. At primes of $S$, we have  $u''=1$, so $x$ is a local unit. At the primes outside of $S$, we have that $u''$ is a unit, hence $x$ is also a local unit. Therefore, $x$ is a local unit everywhere which implies $x$ is a global unit. At the primes of $S$ we have $xu''=x$, and at the primes outside of $S$ we have $xu''=1$. This exactly means that $\phi(x)=xu''$. Hence 
\[
xu''u\in \phi(E_1)U_n.
\]
This completes the proof of the lemma.
\end{proof}

\begin{lemma}\label{quotient finite}
    Let $J$ be a profinite abelian group and $ \Z_p^r\subset J$. Assume that $J/\Z_p^r$ is a finite group.  Then there exists a finite group $T$ such that 
      \[
    J/T \cong \Z_p^r
    \]
\end{lemma}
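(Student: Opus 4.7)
The plan is to take $T$ to be the torsion subgroup of $J$. Showing that $T$ is finite is immediate: since $\Z_p^r$ is torsion-free, $T \cap \Z_p^r = 0$, so the natural map $T \to J/\Z_p^r$ is injective; as $J/\Z_p^r$ is finite by hypothesis, $T$ is finite as well.

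It then remains to verify $J/T \cong \Z_p^r$. A standard check shows $J/T$ is a torsion-free profinite abelian group (if $m\bar{x} = 0$ in $J/T$, then $mx \in T$ has finite order $k$, forcing $x \in T$), and it contains the image of $\Z_p^r$, which is a copy of $\Z_p^r$ since $T \cap \Z_p^r = 0$, as a finite-index subgroup. So the task reduces to proving the following: any torsion-free profinite abelian group $G$ containing $\Z_p^r$ as a finite-index subgroup must be isomorphic to $\Z_p^r$. For this I would decompose $G = \prod_\ell G_\ell$ into its pro-$\ell$ components. Any continuous homomorphism from a pro-$p$ group to a pro-$\ell$ group with $\ell \neq p$ is trivial (its image would be a finite $p$-group inside a pro-$\ell$ group), so $\Z_p^r \subseteq G_p$ and the decomposition $G/\Z_p^r \cong (G_p/\Z_p^r) \times \prod_{\ell \neq p} G_\ell$ forces each $G_\ell$ with $\ell \neq p$ to be finite, hence zero by torsion-freeness. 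Thus $G = G_p$ is a pro-$p$ abelian group with $G/pG$ finite, so Nakayama's lemma makes $G$ a finitely generated $\Z_p$-module; the structure theorem together with torsion-freeness yield $G \cong \Z_p^s$, and the finite-index condition forces $s = r$.

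I do not expect any genuine obstacle. The core observation, that torsion cannot hide in $J$ because $\Z_p^r$ is torsion-free of finite index, is essentially the whole argument. The only mildly subtle step is the pro-$\ell$ decomposition in the reduced claim, which is needed to rule out phantom pro-$\ell$ components for $\ell \neq p$ in $J/T$, but torsion-freeness handles it cleanly.
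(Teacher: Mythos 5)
Your proof is correct, but it identifies the quotient with $\Z_p^r$ by a genuinely different (and heavier) route than the paper. Note first that your $T$ and the paper's coincide: the paper sets $N=\#(J/\Z_p^r)$ and takes $T=J[N]$, and since your torsion subgroup injects into $J/\Z_p^r$ it is killed by $N$, so it equals $J[N]$. The difference is in how the quotient is recognized as $\Z_p^r$. The paper observes $N\Z_p^r\subset NJ\subset \Z_p^r$, so $NJ$ is a finite-index submodule of $\Z_p^r$ and hence free of rank $r$, and multiplication by $N$ gives $J/J[N]\cong NJ$; finiteness of $J[N]$ comes from the snake lemma. This is short and uses nothing beyond the given exact sequence. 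You instead prove the general statement that a torsion-free profinite abelian group containing $\Z_p^r$ with finite index is isomorphic to $\Z_p^r$, via the Sylow decomposition $G=\prod_\ell G_\ell$, topological Nakayama, and the structure theorem for finitely generated $\Z_p$-modules. That argument works (and the reduction steps --- $T$ finite because $T\cap\Z_p^r=0$, $J/T$ torsion-free and profinite, the image of $\Z_p^r$ closed of finite index --- are all sound), and it buys a reusable structural fact; the paper's computation buys brevity and self-containment. One small phrasing slip: the image of $\Z_p^r$ in $G_\ell$ is trivial because it is simultaneously a closed (hence pro-$\ell$) subgroup of $G_\ell$ and a continuous quotient of a pro-$p$ group (hence pro-$p$), not because it is a ``finite $p$-group''; the conclusion is unaffected.
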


\begin{proof}
 Write $G = J/\Z_p^r$ and denote $N$ to be the size of $G$. Then
\[
N\Z_p^r\subset NJ\subset \Z_p^r
\]
Hence $NJ\cong \Z_p^r$. Putting $J[N]=\{x\in J|Nx=0\}$, we have
\begin{equation*}
    \begin{split}
        J/J[N]&\cong  NJ\cong \Z_p^r\\
            x& \rightarrow Nx
    \end{split}
\end{equation*}
In fact, $\#J[N]\leq N$ is finite by the snake lemma.
    \[
    \begin{tikzcd}
       0\arrow[r] &0\arrow[d] \arrow[r]&J[N] \arrow[r]\arrow[d]& G[N]\arrow[d]& \\
       0\arrow[r]&\Z_p^r\arrow[r] \arrow[d,"N"]&J\arrow[r] \arrow[d,"N"]&
       G\arrow[r] \arrow[d,"N"]&0\\       0\arrow[r]&\Z_p^r\arrow[r] &J\arrow[r] &
       G\arrow[r] &0
    \end{tikzcd}
    \]
\end{proof}

\section{Analogue of Greenberg's Criterion For $S$-Ramified $\Z_p$-Extensions of CM Fields}\label{general property}

Let $K$ be a CM field. Throughout the paper, we will assume that
\begin{equation}\label{ass1}
    \text{each of the primes above } p \text{ in } K^+ \text{ split in } K
\end{equation} 
\begin{equation}\label{ass3}
    p \text{ is an odd prime.}
\end{equation}

We will consider the  $\Z_p$-extension $K_\infty/K$ unramified outside $S$ which exists by Theorem \ref{unique Zp}. It is unique if Leopoldt's conjecture holds for $K$. We call this $\Z_p$-extension of the CM field $K$ the $S$-ramified $\Z_p$-extension.

We will show that such $S$-ramified $\Z_p$-extensions $K_\infty/K$  for CM field $K$ have similar properties as cyclotomic $\Z_p$-extensions of totally real fields. First, we prove a series of results analogous to that of Greenberg's in \cite{MR0401702}, the proofs of which closely follow the arguments found in there.

In the case of the cyclotomic $\Z_p$-extension of totally real fields, any prime above $p$ will become totally ramified starting from some higher layer. However, we do not know whether the primes inside $S$ are ramified in the $S$-ramified $\Z_p$ extension $K_\infty/K$. We make the following assumption for the whole paper. 
\begin{equation}\label{ass4}
    \text{For any } S\text{-ramified } \Z_p\text{-extension } K_\infty/K, \text{ assume that all primes in } S \text{ are ramified in } K_\infty/K
\end{equation}

Let $K_n$ be the $n$th layer of the $S$-ramified $\Z_p$-extension $K_\infty/K$. Let $A_n$ denote the $p$-primary part of the class group of $K_n$, and let $\sigma$ be a topological generator of $\Gal(K_\infty/K)\cong \Z_p$.  Recall that for each prime $\mP_i$ of $K^+$ we assume $\mP_i\mO_K = \PP_i\tilde{\PP}_i$ and denote $S = \{\PP_1, \PP_2, \dots, \PP_s\}$. Following the notation of Greenberg, we define
\[
B_n:=\{c\in A_n\mid c^\sigma =c\}=A_n^{\sigma}.
\]
Greenberg \cite[Proposition 1]{MR0401702} showed that $|B_n|$ is bounded for the cyclotomic $\Z_p$-extension of a totally real field assuming Leopoldt's conjecture. We have the following similar result.

\begin{proposition}\label{B bounded}
   Suppose that Leopoldt's conjecture holds for $p$ and $K$. Then $|B_n|$ is bounded as $n\rightarrow \infty$ for the unique $S$-ramified $\Z_p$-extension of $K$. 
\end{proposition}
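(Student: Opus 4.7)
The plan is to interpret $B_n$ via class field theory and reduce the boundedness claim to the finiteness of a single Galois group that is controlled by Theorem \ref{unique Zp}. For each $n$, let $L_n$ denote the maximal unramified abelian pro-$p$ extension of $K_n$, so that $\Gal(L_n/K_n) \cong A_n$, and let $M_n \subset L_n$ be the maximal subfield such that $M_n/K$ is abelian. The conjugation action of $\Gal(K_n/K)$ on $\Gal(L_n/K_n)$ coincides with its natural action on $A_n$; moreover, a central extension of a cyclic group by an abelian group is automatically abelian. Together these give $\Gal(M_n/K_n) = A_n^{\Gal(K_n/K)} = B_n$.

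Next, I would form $M := \bigcup_{n\geq 0} M_n$ and $\tilde M := M \cdot K_\infty$. Then $\tilde M/K$ is pro-$p$ abelian (since both $M/K$ and $K_\infty/K$ are), and unramified outside $S$ (since each $M_n/K_n$ is unramified while $K_n/K$ and $K_\infty/K$ are unramified outside $S$). Hence $\tilde M \subset T$, where $T$ is the field of Theorem \ref{unique Zp}. Under Leopoldt's conjecture, Theorem \ref{unique Zp} gives $\Gal(T/K) \cong \Z_p \times F$ for some finite group $F$. Since $\Gal(T/K)$ surjects onto $\Gal(K_\infty/K)\cong \Z_p$, the kernel $\Gal(T/K_\infty)$ must be finite, and so its further quotient $\Gal(\tilde M/K_\infty)$ is finite as well.

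To conclude, I would check that $M_n \cap K_\infty = K_n$: any $K_{n'} \subset M_n \cap K_\infty$ with $n' > n$ would sit inside $L_n$, but $L_n/K_n$ is unramified while $K_{n'}/K_n$ is ramified at the primes of $S$ by Assumption \eqref{ass4}. Hence restriction provides a surjection
\[
\Gal(\tilde M/K_\infty) \twoheadrightarrow \Gal(M_n K_\infty /K_\infty) \cong \Gal(M_n/K_n) = B_n
\]
for every $n$, giving the uniform bound $|B_n| \leq |\Gal(\tilde M/K_\infty)| < \infty$.

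The principal obstacle is the bookkeeping of the class field theory step, in particular verifying that $M_n$ is really cut out by $B_n$ (so one must check that abelianness over $K$ is detected purely by triviality of the $\Gal(K_n/K)$-action, which uses the cyclicity of $\Gal(K_n/K)$), and confirming the ramification conditions that place $\tilde M$ inside $T$. Once these are in place, the $\Z_p$-rank-$1$ structure of $\Gal(T/K)$ under Leopoldt—the content of Theorem \ref{unique Zp}—furnishes the uniform finiteness essentially for free.
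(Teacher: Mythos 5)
Your argument is essentially the paper's proof: both reduce the claim to the finiteness of $\Gal(T/K_\infty)$ from Theorem \ref{unique Zp} and transport a field of degree $|B_n|$ over $K_n$ up to $K_\infty$ using total ramification. One correction: your field $M_n$ (the paper's $L_n$) is cut out by the \emph{coinvariants} $A_n/A_n^{\sigma-1}$, not the invariants $A_n^{\sigma}=B_n$ --- the abelianness of $\Gal(M_n/K)$ forces the quotient $A_n/A_n^{\sigma-1}$, not a subgroup of fixed points --- but since $\sigma-1$ acting on the finite group $A_n$ has kernel $B_n$ and image $A_n^{\sigma-1}$, these two groups have the same order, which is all the boundedness argument needs. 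Also note that $M_n\cap K_\infty=K_n$ is only guaranteed once $K_\infty/K_n$ is totally ramified, i.e.\ for $n$ large, which again suffices.
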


\begin{proof}
    Let $T$ be the maximal abelian extension of $K$ unramified outside of $S$.  By Theorem \ref{unique Zp} we have that $\Gal(T/K_{\infty}) < \infty$. Let $L_n'$ be the maximal pro-$p$ abelian extension of $K_n$ that is unramified over $K_n$. Then $A_n \cong \Gal(L_n'/K_n)$. Let $L_n$ be the maximal pro-$p$ abelian extension of $K$ that is unramified over $K_n$, so $A_n^{\sigma-1}\cong \Gal(L_n'/L_n)$. 
    Then $[L_n:K_n]=[A_n:A_n^{\sigma-1}]=|B_n|$. When $n$ is large enough, $K_\infty/K_n$ is totally ramified. Thus $L_n\cap K_\infty=K_n$. Hence when $n$ is large enough, we have $|B_n|=[L_n:K_n]=[L_nK_\infty:K_\infty]\leq [T:K_\infty]\leq \infty$.
\end{proof}

Let $ D_n$ be the subgroup of $A_n$ which consists of ideal classes containing a product of prime ideals above the primes of $S$.

    \begin{remark}
        In \cite{MR0401702}, Greenberg defines $ D_n$ to be the subgroup of $A_n$ which consists of ideal classes that contain a product of prime ideals above $p$. In that situation, all primes above $p$ are ramified in the cyclotomic $\Z_p$-extension. The difference in our case is that only the primes in $S$ are ramified in the $S$-ramified $\Z_p$-extension.
    \end{remark}

    Denote $e$ to be the smallest positive integer such that the primes of $K_e$ above $S$ are totally ramified in $K_{\infty}/K_e$.
    
\begin{corollary} \label{D not in B}
Let $[\alpha]\in D_0$.  Then the ideal $\alpha$ will become principal in $K_m$ when $m$ is sufficiently large.
\end{corollary}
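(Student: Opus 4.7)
The plan is to reduce the statement to the case of a single prime $\PP \in S$ and then exploit the total ramification of $\PP$ in $K_\infty/K_e$ together with the boundedness of $|B_m|$ from Proposition \ref{B bounded}. Since $D_0$ is generated by the classes $[\PP_i]$ for $\PP_i \in S$ and the extension-of-ideals map $\alpha \mapsto \alpha\mO_{K_m}$ is multiplicative, it suffices to show that for each $\PP \in S$, the ideal $\PP\mO_{K_m}$ becomes principal in $K_m$ for all sufficiently large $m$, and then take $m$ large enough to work simultaneously for the finitely many primes in $S$.

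Fix $\PP \in S$. For $m \geq e$, every prime of $K_e$ above $\PP$ is totally ramified in $K_m/K_e$. Writing $e_\PP$ for the (common) ramification index of $\PP$ in $K_e/K$, this gives
\[
\PP\,\mO_{K_m} \;=\; \Bigl(\prod_{\PP' \mid \PP} \PP'\Bigr)^{e_\PP\, p^{m-e}},
\]
where the product is over the primes of $K_m$ lying above $\PP$. Let $N_\PP^{(m)} \in A_m$ be the class of $\prod_{\PP' \mid \PP} \PP'$. Since $\Gal(K_m/K)$ permutes the primes above $\PP$ transitively, the underlying ideal is Galois-invariant, so $N_\PP^{(m)} \in B_m$. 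Proposition \ref{B bounded} furnishes a constant $C$ (independent of $m$) with $|B_m| \leq p^C$ for all $m$, hence $p^C\, N_\PP^{(m)} = 0$ in $A_m$. Taking $m \geq e + C$ yields
\[
[\PP\,\mO_{K_m}] \;=\; e_\PP\, p^{m-e}\, N_\PP^{(m)} \;=\; e_\PP\, p^{m-e-C} \cdot \bigl( p^C\, N_\PP^{(m)} \bigr) \;=\; 0,
\]
so $\PP\,\mO_{K_m}$ is principal.

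I do not anticipate any serious obstacle beyond the one already folded into Proposition \ref{B bounded}: the hard work is the uniform bound on $|B_m|$, which rests on Leopoldt's conjecture. Once that is in hand, the rest is bookkeeping of ramification exponents, coupled with the single structural observation that Galois-invariance of the product $\prod_{\PP' \mid \PP} \PP'$ places its class inside $B_m$ so that the bound can be applied.
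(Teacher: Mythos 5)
Your overall strategy---write the extended ideal as a high $p$-power of a Galois-invariant ideal, place the class of that ideal in $B_m$, and invoke Proposition \ref{B bounded}---is the same as the paper's, but your reduction to a single prime introduces a genuine gap. By definition $D_0$ is a subgroup of $A_0$, the $p$-primary part of $\Cl(K)$: it consists of those classes of $p$-power order that happen to contain a product $\prod_i \PP_i^{s_i}$. The individual classes $[\PP_i]$ need not lie in $A_0$ at all (their orders may be divisible by a prime $\ell \neq p$), so $D_0$ is not generated by them, and the stronger statement you reduce to---that each $\PP\mO_{K_m}$ becomes principal for $m$ large---is false in general: since $\Nrm_{m,0}\circ i_{0,m}$ is multiplication by $p^{m}$, the map $i_{0,m}$ is injective on the prime-to-$p$ part of $\Cl(K)$, so any nontrivial $\ell$-part of $[\PP]$ survives to every layer. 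The same defect sits in your key step: Galois-invariance of the ideal $\prod_{\PP'\mid \PP}\PP'$ only places its class in $\Cl(K_m)^{\sigma}$, not in $B_m=A_m^{\sigma}$, so the assertion $N_\PP^{(m)}\in A_m$ is unjustified and the bound $|B_m|\leq p^{C}$ cannot be applied to it.

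The repair is to keep $\alpha$ intact rather than splitting it into prime factors, which is what the paper does. Writing $\alpha\mO_{K_m}=b_m^{\,p^{m-e}}$ with $b_m$ a Galois-invariant product of the primes of $K_m$ above $S$, one has $[b_m]^{p^{m-e}}=i_{0,m}([\alpha])\in A_m$ precisely because $[\alpha]\in A_0$; since raising to the power $p^{m-e}$ is injective on the prime-to-$p$ part of $\Cl(K_m)$, this forces the prime-to-$p$ component of $[b_m]$ to vanish, so $[b_m]\in B_m$, and the boundedness of $|B_m|$ then kills $[\alpha\mO_{K_m}]$ for $m$ large. It is exactly the hypothesis $[\alpha]\in D_0\subset A_0$, which your prime-by-prime decomposition discards, that makes the argument close.
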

\begin{proof}
  Take $\PP_i\in S$. Let $\Omega_{i,n}$ be the product of prime above $\PP_i$ in $K_n$. Then $\Omega_{i,n}\mO_{K_m}=\Omega_{i,m}^{p^{m-n}}$ when $m\geq n\geq e$ since $K_m/K_n$ is totally ramified at primes above $S$. Assume $\alpha= \prod_i\PP_i^{s_i}$ for some integer $s_i$. Then $\alpha \mO_{K_n}$ is a product of $\Omega_{i,n}$. We have $\alpha \mO_{K_m}=b_m^{p^{m-n}}$ for some ideal $b_m$ in $K_m$ and $b_m$ is a product of $\Omega_{i,m}$. Hence, $b_m\in B_m$. By Proposition \ref{B bounded}, we know $B_m$ is bounded. Hence $\alpha \mO_{K_m}$ become principal when $m$ is large enough. 
\end{proof}
The same argument can show the following result when $K_\infty/K$ is totally ramified at all primes in $S$. 
\begin{corollary} \label{D become principal}
Let $[\alpha]\in D_n$ and $e=0$.  Then the ideal $\alpha$ will become principal in $K_m$ when $m\geq n$ is large enough.
\end{corollary}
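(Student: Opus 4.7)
The plan is to mirror the proof of Corollary \ref{D not in B} essentially verbatim, observing that the single role of the hypothesis $e=0$ is to let us replace the starting level $K$ by $K_n$ while preserving the identity $\Omega_{i,a}\mO_{K_b}=\Omega_{i,b}^{p^{b-a}}$ for all $b\geq a\geq 0$. Write $[\alpha]\in D_n$ as a product $\alpha=\prod_i \PP_{i,n}^{s_i}$, where $\PP_{i,n}$ is the unique prime of $K_n$ above $\PP_i$ (unique because $e=0$ means $\PP_i$ is already totally ramified in $K_\infty/K$). Then for any $m\geq n$, total ramification gives $\PP_{i,n}\mO_{K_m}=\PP_{i,m}^{p^{m-n}}$, so
\[
\alpha\mO_{K_m}=\Bigl(\prod_i\PP_{i,m}^{s_i}\Bigr)^{p^{m-n}}=b_m^{p^{m-n}}
\]
for the ideal $b_m=\prod_i\PP_{i,m}^{s_i}$ of $K_m$.

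Next I would show $[b_m]\in B_m$. Since $\PP_{i,m}$ is totally ramified in $K_m/K$, the generator $\sigma$ of $\Gal(K_m/K)$ fixes each $\PP_{i,m}$ setwise as a prime ideal, hence $b_m^\sigma=b_m$ and therefore $[b_m]\in A_m^\sigma=B_m$. By Proposition \ref{B bounded} (and because $A_m$, being the $p$-primary class group, forces $B_m$ to be a $p$-group), there is a constant $N$, independent of $m$, with $p^N\cdot B_m=0$ for all $m$. Choosing $m$ large enough that $m-n\geq N$, the exponent $p^{m-n}$ annihilates $[b_m]$, so
\[
[\alpha\mO_{K_m}]=[b_m]^{p^{m-n}}=1,
\]
i.e.\ $\alpha$ becomes principal in $K_m$.

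There is essentially no obstacle here since the structural content — the telescoping of ramification indices and the boundedness of $B_m$ — is already in place; the only thing to verify cleanly is that dropping the assumption $n=0$ (in favor of $e=0$) leaves the ramification identity intact, which it does precisely because $K_\infty/K$ is already totally ramified at the primes of $S$ from the ground level up.
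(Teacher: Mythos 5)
Your argument is correct and is essentially the paper's own proof: decompose $\alpha$ as $\prod_i\PP_{i,n}^{s_i}$, use total ramification to write $\alpha\mO_{K_m}$ as the $p^{m-n}$-th power of $b_m=\prod_i\PP_{i,m}^{s_i}$, observe $[b_m]\in B_m$, and invoke the boundedness of $B_m$ from Proposition \ref{B bounded}. The only difference is that you spell out two steps the paper leaves implicit (why $[b_m]\in B_m$ and why boundedness gives a uniform annihilating exponent $p^N$), which is a harmless elaboration.
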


\begin{proof}
    Let $\PP_{i,n}$ be the prime of $K_n$ that lies above $\PP_{i} \in S$. Then $\PP_{i,n}\mO_{K_m}=\PP_{i,m}^{p^{m-n}}$.  We may assume that $\alpha= \prod_i\PP_{i,n}^{s_i}$ for some integers $s_i$. Then $\alpha\mO_{K_m}=\prod_i\PP_{i,n}^{s_i}\mO_{K_m}=\prod_i\PP_{i,m}^{{s_i}p^{(m-n)}}$ for $m\geq n$. On the other hand, the class of $\prod_i\PP_{i,m}^{s_i}$ is contained in $B_m$, and the size $B_m$ is bounded as $m\rightarrow \infty$. Hence $\alpha\mO_{K_m}$ becomes principal when $m$ is large enough. 
\end{proof}

Let $i_{n,m}:\Cl(K_n)\rightarrow \Cl(K_m)$ for $m>n$ be the map induced by inclusion, let $\Nrm_{m,n}:\Cl(K_m)\rightarrow \Cl(K_n) $ for $m>n$ be the norm map, and write $H_{n,m}=\Ker(i_{n,m})$. Because $\Nrm_{m,n}\circ i_{n,m}=p^{m-n}$, we have that $H_{n,m}\subset A_{n}$. Put $H_n=\bigcup_{m\geq n}H_{n,m}$. Corollary \ref{D not in B} says that $D_0\in H_0$, and Corollary \ref{D become principal} says that $D_n\subset H_n$ when $e=0$. Greenberg \cite[Proposition 2]{MR0401702} proved that the size of $A_n$ is bounded as $n \to \infty$ for the cyclotomic $\Z_p$-extension of a totally real field if and only if $H_n=A_n$ for all $n$. We have the following similar result.
 
\begin{proposition} \label{Hn=An}
   Assuming Leopoldt's conjecture holds for $p$ and $K$, we have that $|A_n|$ is bounded if and only if $H_n=A_n$ for all $n$ for the $S$-ramified $\Z_p$-extension.
\end{proposition}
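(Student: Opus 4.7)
The plan is to adapt Greenberg's argument in \cite[Proposition 2]{MR0401702} from the cyclotomic $\Z_p$-extension of a totally real field to the $S$-ramified $\Z_p$-extension of a CM field. The principal ingredients are the boundedness of $|B_n|$ (Proposition \ref{B bounded}) and the surjectivity of the norm maps $\Nrm_{m,n}\colon A_m\twoheadrightarrow A_n$ for $m\geq n\geq e$; the latter follows from Assumption \eqref{ass4} because all primes of $S$ become totally ramified in $K_\infty/K_e$, so that each class in $A_n$ is the norm of a class in $A_m$ for $m\geq n\geq e$.

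For the direction ``$|A_n|$ bounded $\Rightarrow H_n=A_n$'', surjectivity of the norms together with boundedness of $|A_n|$ force, for all sufficiently large $n$, the norms $\Nrm_{m,n}\colon A_m\to A_n$ to be isomorphisms for every $m\geq n$. Combining this with the identity $\Nrm_{m,n}\circ i_{n,m}=p^{m-n}\cdot\mathrm{id}_{A_n}$, we see that $i_{n,m}=0$ precisely when $p^{m-n}$ annihilates $A_n$; since $|A_n|$ is bounded, this holds for every sufficiently large $n$ once $m-n$ is large enough, yielding $H_n=A_n$ for such $n$. The remaining lower layers are then handled by factoring $i_{n,m}=i_{n_0,m}\circ i_{n,n_0}$ through the first layer $n_0$ where the above argument applies.

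For the direction ``$H_n=A_n$ for all $n\Rightarrow |A_n|$ bounded'', I would pass to the inverse limit $X=\varprojlim_n A_n$ (taken with respect to the norm maps), which is a finitely generated module over the Iwasawa algebra $\Z_p[[\sigma-1]]$. Surjectivity of the norms yields surjections $X\twoheadrightarrow A_n$ for $n\geq e$, and the hypothesis $H_n=A_n$ says that every class in $A_n$ eventually capitulates, which is equivalent to the vanishing of the direct limit $\varinjlim_n A_n$. The goal is then to follow Greenberg's module-theoretic argument to show that, given both the bound on $|B_n|$ from Proposition \ref{B bounded} and this capitulation property, $X$ must be finite; once this is known, the standard identification of $A_n$ as a quotient of $X$ (accounting for ramification data) immediately gives boundedness of $|A_n|$.

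The main obstacle is this backward direction, specifically the implication that capitulation plus bounded $|B_n|$ forces the Iwasawa module $X$ itself to be finite. Greenberg's strategy is to bound $X/(\sigma-1)X$ via the $B_n$'s and to bound $(\sigma-1)X$ via the capitulation-induced relations arising from the commutative diagrams relating $i_{n,m}$ and $\Nrm_{m,n}$. A mild technical complication specific to our setting is that the primes of $S$ need not be totally ramified from level $0$; this forces minor modifications at the bottom of the tower (replacing layer $0$ by layer $e$ in various identifications) but does not alter the asymptotic structure of the argument.
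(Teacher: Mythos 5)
Your forward direction ($|A_n|$ bounded $\Rightarrow H_n=A_n$) is correct and is essentially the paper's argument: total ramification above $S$ makes the norm maps $\Nrm_{m,n}\colon A_m\to A_n$ surjective for $m\geq n\geq e$, boundedness upgrades them to isomorphisms for $m\geq n\geq n_0$, and the identity $\Nrm_{m,n}\circ i_{n,m}=p^{m-n}$ then forces $i_{n,m}=0$ once $p^{m-n}$ kills $A_n$; the lower layers are handled by factoring through level $n_0$, exactly as in the paper.

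The gap is in the converse. The paper disposes of ``$H_n=A_n$ for all $n$ $\Rightarrow |A_n|$ bounded'' in one line by invoking Iwasawa's theorem that the capitulation kernels $H_n$ are \emph{uniformly} bounded in $n$ (\cite[Theorem 10, p.~264]{MR0349627}; concretely, each $H_n$ embeds into the maximal finite submodule of the Iwasawa module, so $|H_n|$ is bounded independently of $n$). Given that, $A_n=H_n$ immediately yields $|A_n|=|H_n|$ bounded. You instead propose to deduce finiteness of $X=\varprojlim A_n$ from ``capitulation plus bounded $|B_n|$,'' but you never carry this out and you explicitly flag it as the main obstacle. Note that, in the presence of eventually surjective norm maps, ``$X$ is finite'' is \emph{equivalent} to ``$|A_n|$ is bounded,'' so your reduction restates the goal rather than proving it; moreover Proposition \ref{B bounded} is not needed for this direction at all. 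The missing ingredient is precisely Iwasawa's uniform bound on the $H_n$; without it your backward direction is incomplete.
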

\begin{proof}
       First, Iwasawa proved that $|H_n|$ is bounded \cite[Theorem 10 on page 264]{MR0349627}, so $|A_n|$ is bounded if $A_n = H_n$ for all $n$.

       Now, assume $|A_n|$ is bounded.  Let $L_n'$ be the maximal pro-$p$ abelian extension of $K_n$ that is unramified over $K_n$. Then $A_n \cong \Gal(L_n'/K_n)$. When $m\geq n\geq e$, the field extension  $K_m/K_n$ is totally ramified at primes above $S$. This implies that $K_m\cap L_n'=K_n$, and therefore the restriction map from $\Gal(L_m'/K_m)\rightarrow\Gal(L_n'/K_n)$ is surjective. Hence, the norm map $\Nrm_{m,n}:A_m\rightarrow A_n$ is surjective for $m\geq n\geq e$. Since we are assuming $|A_n|$ is bounded as $n\rightarrow \infty$, we have $\Nrm_{m,n}$ is an isomorphism when $m\geq n\geq n_0$ for some sufficiently large integer $n_0$.

       Take $c\in A_n$. Define $c_r:=i_{n,r}(c)$ where $r\geq n_0$. Take $m$ large enough such that $c_r^{p^{m-r}}=1$. Then $\Nrm_{m,r}(i_{r,m}(c_r))=c_r^{p^{m-r}}=1$. We know $\Ker\Nrm_{m,r}=1$ since $m\geq r\geq n_0$, which implies that $i_{n,m}(c)=i_{r,m}(c_r)=1$. Hence $c\in H_n$ by definition.     
\end{proof}

To simplify our upcoming arguments, we make one more assumption througout the rest of the paper:
\begin{equation}\label{ass2}
    \text{Assume that } K_\infty/K \text{ is totally ramified at all primes in } S. 
\end{equation}
In other words, we further assume $e=0$. Assumption \eqref{ass2} is equivalent to $e=0$ and assumption \eqref{ass4}. One big advantage of assumption \eqref{ass2} is that it implies $D_n\subset B_n$.

\section{The Case Where $p$ is Inert in $K^+$}\label{sec2}

In this section, we assume that $p$ remains prime in $K^+$.  Hence there are only two primes $\PP$ and $\Tilde{\PP}$ above $p$ in $K$, and $S = \{\PP\}$.  We still keep assumptions \eqref{ass1}, \eqref{ass3}, and \eqref{ass2}.

\begin{theorem}\label{p inert}
   Suppose that the odd prime $p$ is inert in $K^+/\Q$ and Leopoldt's conjecture holds. Let $K_\infty/K$ be the unique $S$-ramified $\Z_p$-extension. With the same notation as before, the following statements are equivalent:
    \begin{enumerate}
        \item $A_0=H_0$.
        \item $|A_n|$ is bounded as $n\rightarrow \infty$.
    \end{enumerate}
\end{theorem}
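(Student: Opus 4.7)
Direction (b)$\Rightarrow$(a) is immediate from Proposition~\ref{Hn=An}: boundedness of $|A_n|$ forces $A_n = H_n$ for every $n$, and in particular $A_0 = H_0$.

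For (a)$\Rightarrow$(b), the plan is to prove $A_n = H_n$ for every $n$ and then invoke Proposition~\ref{Hn=An}. The approach follows Greenberg's proof of Proposition~3 in~\cite{MR0401702}, which treated the analogous question for the cyclotomic $\Z_p$-extension of a totally real field with a unique prime above $p$. In the present setting, $S = \{\PP\}$ since $p$ is inert in $K^+$, and by \eqref{ass2} the prime $\PP$ is the only ramified prime of $K_\infty/K$ and is totally ramified. The same class-field-theoretic argument that appears in the proof of Proposition~\ref{Hn=An} gives that the norm map $N_{n,0} \colon A_n \twoheadrightarrow A_0$ is surjective for every $n$.

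Fix $n$ and take $c \in A_n$ represented by an ideal $I$ of $K_n$. Expanding $N_{K_n/K}(I)\,\mO_{K_m} = \prod_{g \in \Gal(K_n/K)} \tilde g\!\left(I\,\mO_{K_m}\right)$ for any chosen set of extensions $\tilde g \in \Gal(K_m/K)$ gives, for every $m \geq n$, the identity
\[
i_{0,m}\!\circ N_{n,0}(c) \;=\; \nu_n\!\left(i_{n,m}(c)\right) \qquad \text{in } A_m,
\]
where $\nu_n := 1 + \sigma + \sigma^2 + \cdots + \sigma^{p^n-1}$. Since $A_0 = H_0$ is a finite $p$-group, one may fix $m_0$ with $i_{0,m}(A_0) = 0$ for all $m \geq m_0$; setting $c_m := i_{n,m}(c)$ this yields $\nu_n(c_m) = 0$ in $A_m$ for every $m \geq m_0$ and every $c \in A_n$.

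The main obstacle is to upgrade this uniform annihilation by $\nu_n$ to genuine capitulation $i_{n,m}(A_n) = 0$ for some $m$. The plan is to recast the problem in terms of the Iwasawa module $X := \varprojlim_n A_n$ (inverse limit under norms): under Leopoldt, $X$ is a finitely generated module over $\Lambda := \Z_p[[T]]$ with $T = \sigma - 1$ (apply Nakayama to the finite quotient $X/TX \cong A_0$), and since $\PP$ is the unique and totally ramified prime one has the standard identification $A_n \cong X/\omega_n X$ for $\omega_n := (1+T)^{p^n} - 1$. The condition ``$\nu_n$ kills $i_{n,m}(A_n)$ in $A_m$ for every $m \geq m_0$'' should translate under this identification to $\nu_n X \subseteq T \nu_n X$, and the compact Nakayama lemma applied to the finitely generated $\Lambda$-module $\nu_n X$ then forces $\nu_n X = 0$. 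Since $\omega_n = T\nu_n$ we deduce $\omega_n X = 0$, so $A_n \cong X/\omega_n X \cong X$; as $A_n$ is finite, so is $X$, and hence $|A_n|$ is bounded. The technical point requiring the most care is precisely the translation between the tower-level capitulation data and the $\Lambda$-module condition on the inverse limit $X$.
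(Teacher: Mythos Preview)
Your approach is valid and genuinely different from the paper's. The paper follows Greenberg's classical route: from Chevalley's formula \eqref{Cheva} applied to $K_n/K$ (where the only ramified prime is $\PP$, with $e_\PP=p^n$) one reads off $|\Am(K_n/K)_{st}|=h_K/[\mO_K^*:\Nrm_n(\mO_{K_n}^*)]$, so the unit-norm index is bounded. One then argues by contradiction: if some $A_n\neq H_n$, Lemma~\ref{x sigma} applied to $A_n/H_n$ produces $c\in A_n\setminus H_n$ with $c^{\sigma-1}\in H_n$; passing to a high layer and using the bounded unit index together with Hilbert~90, one shows that $i_{n,s}(c)$ lands in $\Am_{st}(K_s/K)[p^\infty]=i_{0,s}(A_0)D_s\subseteq H_s$, a contradiction. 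You bypass Chevalley and Hilbert~90 entirely and work directly with $X=\varprojlim A_n$: the hypothesis that $\PP$ is the unique ramified prime and is totally ramified gives the identification $A_n\cong X/\omega_nX$, and the rest is Nakayama. This is more structural and, once the module dictionary is in place, shorter; the paper's argument has the virtue of staying entirely within classical genus theory.

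One repair is needed in your translation step. Under $A_m\cong X/\omega_mX$ the map $i_{n,m}$ is multiplication by $\nu_{n,m}:=\omega_m/\omega_n$, so for $c=\bar x$ the element $\nu_n\cdot c_m$ corresponds to $\nu_n\nu_{n,m}x=\nu_{0,m}x$ in $X/\omega_mX$. Thus what you actually obtain is $\nu_{0,m}X\subseteq\omega_mX=T\,\nu_{0,m}X$ for each $m\ge m_0$, not $\nu_nX\subseteq T\nu_nX$. Nakayama on $\nu_{0,m_0}X$ then gives $\nu_{0,m_0}X=0$, hence $\omega_{m_0}X=0$ and $X\cong A_{m_0}$ is finite; since $\omega_{m_0}\mid\omega_m$ for $m\ge m_0$ this bounds $|A_m|$. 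In fact the detour through a general $n$ is unnecessary: with $n=0$ the hypothesis $A_0=H_0$ says exactly that $i_{0,m_0}(A_0)=0$ for some $m_0$, i.e.\ $\nu_{0,m_0}X\subseteq\omega_{m_0}X$, and the argument finishes immediately. (A side remark: Leopoldt is not needed for the finite generation of $X$ over $\Lambda$; that already follows from $X/TX\cong A_0$ being finite, which uses only the single-totally-ramified-prime hypothesis.)
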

\begin{remark}
    Greenberg's Theorem 1 in \cite{MR0401702} states a similar criterion for cyclotomic $\Z_p$-extension of a totally real field when $p$ remains prime.
\end{remark}

Before we prove the Theorem \ref{p inert}, let us recall some well-known formulas for the order of the so called \textit{ambiguous} and \textit{strong ambiguous class groups}. Let $L/F$ be a cyclic extension of number fields, and $\sigma$ a generator of the Galois group $\Gal(L/F)$. We call an ideal class $[c]\in \Cl(L)$ an ambiguous class if $[c]^\sigma=[c]$. We call an ideal class $[c]\in \Cl(L)$ a strongly ambiguous class if $c^\sigma=c$, that is 
\[
\Am(L/F):=\{ [c]\in \Cl(L) \, | \, [c]^\sigma=[c]\}
\]
and
\[
\Am(L/F)_{st}:=\{ [c]\in \Cl(L) \, |\, c^\sigma=c\}.
\]
In other words, the group $\Am(L/F)$ consists of ideal classes that are fixed by the Galois group $\Gal(L/F)$, and the group $\Am_{st}(L/F)$ consists of ideal classes that contain an ideal that is fixed by the Galois group $\Gal(L/F)$. The order of these groups are given by Chevalley \cite{MR3533015}:

\begin{equation} \label{Cheva}
    \begin{split}
    |\Am(L/F)|&=\frac{h_F\prod_{v}e_{v}}{[L:F][\mO_F^*:\mO_F^*\cap \Nrm(L^*)]}\\
|\Am(L/F)_{st}|&=\frac{h_F\prod_{v}e_{v}}{[L:F][\mO_F^*:\Nrm(\mO_L^*)]}    
    \end{split}   
\end{equation}
where the product is taken over all places $v$ of $L$, $e_v$ is the ramification degree of $v$ in $L/F$, and $h_F$ is the class number of $F$. For any abelian group $M$, we write $M[p^\infty] = \{x\in M \,| \, p^nx=0 \text{ for some } n\}$ to denote the $p$-part of $M$.

\begin{proof}[Proof of Theorem \ref{p inert}]
    Proposition \ref{Hn=An} gives the implication $(b)\Longrightarrow (a)$.

  Now assume that  $A_0=H_0$.  Let $\sigma$ be the generator of $\Gal(K_\infty/K)$. Hence $\sigma$ is also the generator of Galois group $\Gal(K_n/K)$ by restriction. Under our assumptions, there are only two primes $\PP$ and $\Tilde{\PP}$ above $p$ in $K$. Recall we assume that the $\Z_p$-extension $K_\infty/K$ is totally ramified over $\PP$ and unramified over $\tilde{\PP}$ by definition. By Proposition \ref{B bounded}, the size of $B_n=\Am(K_n/K)[p^\infty]$ is bounded. Hence the subgroup $\Am(K_n/K)_{st}[p^\infty]$ is bounded. By (\ref{Cheva}), 
  \[
  |\Am(K_n/K)_{st}|=\frac{h_K\prod_{v}e_{v}}{[K_n:K][\mO_K^*: \Nrm(\mO_{K_n}^*)]}=\frac{h_K \cdot p^n}{p^n[\mO_K^*: \Nrm(\mO_{K_n}^*)]}=\frac{h_K}{[\mO_K^*: \Nrm(\mO_{K_n}^*)]}
  \]
 and so it must be that $[\mO_K^*: \Nrm(\mO_{K_n}^*)]$ is bounded.
  
  Suppose $A_n\neq H_n$ for some $n$. Then by Lemma \ref{x sigma}, there is $c\in A_n$ such that $c\not \in H_n$ and $c^{\sigma-1}\in H_n$. Hence there exists $m$ such that $i_{n,m}(c^{\sigma-1})=0$. Let $c'=i_{n,m}(c)$, and let $\alpha$ be an ideal of $K_m$ such that  $[\alpha]\in c'$. Then $\alpha^{\sigma-1}=(\beta)$ for some $\beta\in K_m^*$, and $\Nrm_{m,0}(\beta)=\varepsilon\in \mO_{K}^*$. Since $[\mO_K^*: \Nrm(\mO_{K_n}^*)]$ is bounded, we know $\Nrm_{s,0}(\beta)=\varepsilon^{s-m}\in \Nrm_{s,0}(\mO_{K_s}^*)$ for $s$ sufficiently larger than $m$.  So, there exists $\eta \in \mO_{K_s}^*$ such that $\Nrm_{s,0}(\beta)=\Nrm_{s,0}(\eta)$, and there is $\gamma\in K_s^*$ such that $\beta\eta^{-1}=\gamma^{\sigma-1}$ by Hilbert's Theorem 90.  Therefore,
  \[
  (\alpha\mO_{K_s})^{\sigma-1}=(\beta)=(\beta\eta^{-1})=(\gamma^{\sigma-1}).
  \]
Hence the ideal class $i_{n,s}(c)$ contains a fractional ideal $ \alpha\mO_{K_s}(\gamma)^{-1}$ that is invariant under the action of $\Gal(K_s/K)$. In other words, $i_{n,s}(c)\in \Am_{st}(K_s/K)[p^\infty]$. Notice that  $\Am_{st}(K_s/K)[p^\infty]=i_{0,s}(A_0)D_s$. We know $i_{0,s}(A_0)\subset H_s $ by assumption and $D_s\subset H_s$ by Corollary \ref{D become principal}. Therefore $i_{n,s}(c)\in \Am_{st}(K_s/K)[p^\infty] \subset H_s$, which contradicts our assumption that $c\not\in H_n$. 
\end{proof}

\begin{lemma}\label{x sigma}
    Let $ \sigma$ be a generator of the cyclic group $G=\Z/p^n\Z$. Let $X\neq \{0\}$ be an abelian $p$-group with an action of $\Z/p^n\Z$ on it. Then there is an element $x\in X$ such that $x\neq 0$ and $x^{\sigma-1}=0$.
\end{lemma}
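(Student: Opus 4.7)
The plan is to reduce to the $p$-torsion subgroup $X[p] := \{x \in X : px = 0\}$, writing the operation on $X$ additively (so the equation $x^{\sigma-1} = 0$ in the statement becomes $(\sigma - 1)x = 0$). First I would observe that $X[p]$ is nonzero: since $X \neq 0$ is a $p$-group, any nonzero $v \in X$ has order $p^k$ for some $k \geq 1$, whence $p^{k-1}v$ is a nonzero element of $X[p]$. The $G$-action preserves $X[p]$, since multiplication by $p$ commutes with the $G$-action, so $X[p]$ is naturally an $\F_p[G]$-module, i.e., an $\F_p$-vector space on which $\sigma$ acts linearly.

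The key step is then to show that $\sigma - 1$ acts nilpotently on $X[p]$. Since $\sigma$ has order dividing $p^n$, the operator $\sigma^{p^n}$ is the identity on $X[p]$, so $\sigma^{p^n} - 1 = 0$ there. In characteristic $p$ the Frobenius identity gives $(\sigma - 1)^{p^n} = \sigma^{p^n} - 1$, so $(\sigma - 1)^{p^n}$ annihilates $X[p]$. A nilpotent $\F_p$-linear endomorphism of a nonzero vector space has nontrivial kernel, so there is $x \in X[p]$ with $x \neq 0$ and $(\sigma - 1)x = 0$, which is the claim.

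There is essentially no serious obstacle here; the only subtlety is the notational switch between the multiplicative $x^{\sigma-1}$ used in the statement and the additive $(\sigma - 1)x$ used in the argument, and keeping track that the statement is not assuming $X$ finite. In the only place the lemma is applied in the paper — namely with $X = A_n/H_n$ in the proof of Theorem \ref{p inert} — the group $X$ is in fact finite, and in that finite setting one could alternatively use orbit-counting: every $G$-orbit in $X$ has $p$-power cardinality, so $|X^G| \equiv |X| \equiv 0 \pmod{p}$, and combined with $0 \in X^G$ this already forces $|X^G| \geq p$. The nilpotence argument is preferable because it handles the general case uniformly.
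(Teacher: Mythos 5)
Your proof is correct, but it takes a genuinely different route from the paper's. The paper writes down the exact sequence $0\to X^G\to X\xrightarrow{\sigma-1}X\to X/X^{\sigma-1}\to 0$, asserts that $X^G$ (hence $X/X^{\sigma-1}$) is nontrivial ``since $G$ and $X$ are $p$-groups,'' and then iterates this to build a strictly decreasing filtration $X\supsetneq X^{\sigma-1}\supsetneq X^{(\sigma-1)^2}\supsetneq\cdots\supsetneq X^{(\sigma-1)^k}=0$, finally taking a nonzero element of the last nonzero term. You instead pass to the $p$-torsion $X[p]$, regard it as an $\F_p[G]$-module, and get nilpotence of $\sigma-1$ from the Frobenius identity $(\sigma-1)^{p^n}=\sigma^{p^n}-1=0$ in characteristic $p$, so that $\ker(\sigma-1)$ is nonzero on any nonzero subspace. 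Your version is more self-contained and strictly more general: the fixed-point fact the paper invokes is standardly proved by orbit counting, which presupposes $X$ finite, and likewise the termination of the paper's filtration at $0$ after finitely many steps is only immediate under a finiteness (or bounded-exponent) hypothesis, whereas your argument needs neither and proves the lemma exactly as stated. You are also right that in the only application ($X=A_n/H_n$ in the proof of Theorem \ref{p inert}) the group is finite, so the cruder counting argument would suffice there. One further observation in your favor: once $X^G\neq 0$ is known, the lemma is already proved (any nonzero fixed element satisfies $x^{\sigma-1}=0$), so the paper's filtration step is logically superfluous; your argument avoids that detour entirely.
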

\begin{proof}
    Consider the following exact sequence,
    \[
    0\rightarrow X^G\rightarrow X\xrightarrow{\sigma-1}X\rightarrow X/X^{\sigma-1}\rightarrow0
    \]
    As usual we write $X^G$ to be the set of elements of $X$ fixed under the action of $G$. Since $G$ and $X$ are $p$-groups, we have that $X^G$, and consequently $X/X^{\sigma-1}$, are nontrivial. Next, consider the exact sequence 
    \[
     0\rightarrow (X^{\sigma-1})^G\rightarrow X^{\sigma-1}\xrightarrow{\sigma-1}X^{\sigma-1}\rightarrow X^{\sigma-1}/X^{(\sigma-1)^2}\rightarrow0.
    \]
Continuing with the same analysis, we have a filtration,
\[
X\supsetneq X^{\sigma-1}\supsetneq X^{(\sigma-1)^2}\supsetneq \cdots\supsetneq X^{(\sigma-1)^k}=0
\]
for some integer $k$. Taking a nontrivial element $x\in X^{(\sigma-1)^{k-1}}$, we have $x^{\sigma-1}=0$.
\end{proof}

\section{The Case Where $p$ Splits Completely in $K^+$}\label{sec3}
In this section, we assume that $p$ splits completely in $K^+$. We once again keep the assumptions \eqref{ass1}, \eqref{ass3}, and \eqref{ass2} as in the previous section. Goto \cite{MR2293501} studies this case for an abelian CM field $K$, but here we do not need to assume $K$ is abelian.

\begin{theorem}\label{TH4.1}
    Assume that $p$ splits completely in $K^+$ and Leopoldt's conjecture holds for $K$. Consider the $S$-ramified $\Z_p$-extension $K_\infty/K$ defined by theorem \ref{unique Zp}.  The following two statements are equivalent:
    \begin{enumerate}
        \item $B_n=D_n$ for all sufficiently large $n$.
        \item $|A_n|$ is bounded as $n\rightarrow \infty$.
    \end{enumerate}
\end{theorem}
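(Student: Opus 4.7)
The plan is to adapt the ideas in the proof of Theorem \ref{p inert}, replacing the criterion $A_0 = H_0$ by the layer-wise criterion $B_n = D_n$, in the spirit of Greenberg's Theorem 2 in \cite{MR0401702} for the analogous totally real cyclotomic case with $p$ splitting completely.

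The direction $(a) \Rightarrow (b)$ should go cleanly. Assume $B_n = D_n$ for every $n \geq n_0$. By Corollary \ref{D become principal} (using assumption \eqref{ass2}), $D_m \subset H_m$ for every $m$, hence $B_m \subset H_m$ for $m \geq n_0$. Suppose for contradiction that $A_n \neq H_n$ for some $n$, and apply Lemma \ref{x sigma} to the nonzero $p$-group $A_n / H_n$ to produce a class $c \in A_n \setminus H_n$ with $(\sigma - 1)c \in H_n$. Choose $m \geq \max(n, n_0)$ so that $i_{n,m}((\sigma - 1)c) = 0$; then $c' := i_{n,m}(c)$ is fixed by $\sigma$, so $c' \in B_m = D_m \subset H_m$. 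Hence there exists $s \geq m$ with $i_{m,s}(c') = 0$, which by $i_{m,s} \circ i_{n,m} = i_{n,s}$ yields $i_{n,s}(c) = 0$ and therefore $c \in H_n$, a contradiction. Thus $A_n = H_n$ for every $n$, and $|A_n|$ is bounded by Proposition \ref{Hn=An}.

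For $(b) \Rightarrow (a)$ I would proceed as follows. Assume $|A_n|$ is bounded; Proposition \ref{Hn=An} yields $A_n = H_n$ for every $n$, so in particular $A_0 = H_0$. Since $A_0$ is finite, there is $N$ with $i_{0,n}(A_0) = 0$ for $n \geq N$. A $\sigma$-invariant ideal of $K_n$ decomposes into the ramified primes of $S$ together with an ideal extended from $K$, so $\Am_{st}(K_n/K)[p^{\infty}] = i_{0,n}(A_0) \cdot D_n$, which equals $D_n$ for $n \geq N$. It remains to show $B_n = \Am_{st}(K_n/K)[p^{\infty}]$ for large $n$, that is, every $\sigma$-fixed ideal class contains a $\sigma$-fixed representative. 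Taking the ratio of the two Chevalley formulas \eqref{Cheva} on $p$-parts, this reduces to the vanishing, for $n$ large, of the $p$-part of $[\mO_K^* \cap \Nrm_{K_n/K}(K_n^*) : \Nrm_{K_n/K}(\mO_{K_n}^*)]$. I would establish this via the Tate cohomology long exact sequence for $1 \to \mO_{K_n}^* \to K_n^* \to P_{K_n} \to 1$, combined with Hilbert 90 for $K_n^*$, where finiteness of the Iwasawa module $X = \varprojlim A_n$ (equivalent to boundedness of $|A_n|$) and Leopoldt's conjecture for $K$ jointly control the relevant local/global unit cokernels in the limit.

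The main obstacle is this last vanishing step in $(b) \Rightarrow (a)$. In the inert case of Theorem \ref{p inert}, the companion unit index $[\mO_K^*:\Nrm(\mO_{K_n}^*)]$ is itself bounded (by the $|B_n|$-boundedness of Proposition \ref{B bounded} and the Chevalley formula for $\Am_{st}$) and that boundedness drives the whole descent; but in the split case this index grows like $p^{n(s-1)}$ with $s = [K^+:\Q] > 1$, and only a finer index needs to vanish in the limit. Overcoming this will require genuinely Iwasawa-theoretic input (finiteness of $X$ together with Leopoldt) rather than the purely Chevalley-based argument that sufficed in Theorem \ref{p inert}.
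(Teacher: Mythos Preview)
Your argument for $(a)\Rightarrow(b)$ is correct and takes a genuinely different route from the paper. The paper argues via norm maps: since $\Nrm_{m,n}:D_m\to D_n$ is surjective and $|B_n|$ is bounded (Proposition~\ref{B bounded}), the restriction $\Nrm_{m,n}:B_m\to B_n$ is an isomorphism for large $m\geq n$, whence $\Ker(\Nrm_{m,n})\cap B_m=1$; the fixed-point lemma for $p$-groups then forces $\Ker(\Nrm_{m,n})=1$, so $\Nrm_{m,n}:A_m\to A_n$ is eventually an isomorphism and $|A_n|$ stabilizes. Your approach instead transports the $H_n$-filtration mechanism of Theorem~\ref{p inert}, and it works because $B_m=D_m\subset H_m$ absorbs any $\sigma$-fixed class at a high layer. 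Both are short; the paper's norm argument avoids Proposition~\ref{Hn=An}, while yours unifies the inert and split cases under one template.

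For $(b)\Rightarrow(a)$ you have correctly reduced to showing $B_n=\Am_{st}(K_n/K)[p^\infty]$ for large $n$, but the proposed route through Tate cohomology and ``finiteness of $X$'' is too vague and misses the specific mechanism. The paper proceeds as follows. Given $c\in B_n$, first use boundedness of $|A_n|$ to get $\Nrm_{m,n}:B_m\to B_n$ an isomorphism for all $m\geq n$ large, and \emph{lift} $c$ to $c'\in B_m$. Writing $J^{\sigma-1}=(\beta)$ for a representative $J$ of $c'$ and $\varepsilon=\Nrm_{m,0}(\beta)\in\mO_K^*$, local class field theory shows $\varepsilon$ is a $p^m$-th power in $K_{\PP_i}^*$ for each $\PP_i\in S$. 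The crucial step---and this is where ``$p$ splits completely in $K^+$'' and ``$K$ is CM'' are actually used---is to write $\varepsilon^2=\varepsilon'\varepsilon''$ with $\varepsilon'$ a root of unity (necessarily of order dividing $p-1$) and $\varepsilon''\in\mO_{K^+}^*$; because $K_{\PP_i}\cong K^+_{\mP_i}\cong K_{\tilde\PP_i}\cong\Q_p$, the condition ``$\varepsilon''$ is a local $p^m$-th power at $\PP_i$'' transfers to $\tilde\PP_i$. Hence $\varepsilon$ is a local $p^m$-th power at \emph{every} prime above $p$, and now Leopoldt (with $m$ enlarged as needed) yields $\varepsilon=\eta^{p^n}$ for some global $\eta\in\mO_K^*$. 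With $\alpha=\Nrm_{m,n}(\beta)$ one has $\Nrm_{n,0}(\alpha\eta^{-1})=1$, Hilbert~90 gives $\alpha\eta^{-1}=\gamma^{\sigma-1}$, and $I(\gamma)^{-1}$ is a $\sigma$-invariant representative of $c$. This CM/splitting transfer from $S$ to the conjugate primes is the ingredient your outline does not supply, and it is not a consequence of generic finiteness of the Iwasawa module.
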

\begin{remark}
      Greenberg \cite[Theorem 2]{MR0401702} states a similar criterion for the cyclotomic $\Z_p$-extension of a totally real field when $p$ splits completely. The method of proof is also similar to Greenberg's. 
\end{remark}
\begin{proof}
    Assume that $B_n=D_n$ for all sufficiently large $n$. Since $\Nrm_{m,n}:D_m\rightarrow D_n$ is surjective and $B_n$ is bounded by Proposition \ref{B bounded}, we know that $\Nrm_{m,n}: B_m\rightarrow B_n$ is isomorphism for all $m\geq n\geq n_0$ for some $n_0$. Let $\Ker(\Nrm_{m,n})$ be the kernel of the map $\Nrm_{m,n}: A_m\rightarrow A_n$. Then $\Ker(\Nrm_{m,n})\cap B_m=1 $ for all  $m\geq n\geq n_0$ for some $n_0$.  View $\Ker(\Nrm_{m,n})$ as an abelian $p$-group with the usual action of $\Gal(K_m/K)$. By the general theory of group actions, the fixed point of a nontrivial abelian $p$-group by a $p$-group is nontrivial. The fixed point of  $\Ker(\Nrm_{m,n})$ by $\Gal(K_m/K)$ is $\Ker(\Nrm_{m,n})\cap B_m=1 $. Hence $\Ker(\Nrm_{m,n})=1$, and thus $\Nrm_{m,n}: A_m\rightarrow A_n$ is an isomorphism when $m\geq n\geq n_0$. This implies that  $|A_n|$ is bounded. 

    Now assume that $|A_n|$ is bounded as $n\rightarrow \infty$. We will prove that 
    \[
    B_n=\Am(K_n/K)[p^\infty]=\Am_{st}(K_n/K)[p^\infty]
    \]
    when $n$ is large enough. Recall that $\Am_{st}(K_n/K)[p^\infty]=i_{0,n}(A_0)D_n$ and $i_{0,n}(A_0)$ will become trivial when $n$ is large enough by Proposition \ref{Hn=An}. Therefore, $B_n = \Am_{st}(K_n/K)[p^\infty]$ implies $B_n=D_n$ when $n$ is large enough. 

    Since $|A_n|$ is bounded, reverse the argument in the first paragraph to get $\Nrm_{m,n}:B_m\rightarrow B_n$ is an isomorphism when $m\geq n\geq n_0$ for some $n_0$. Let $c\in B_n$ and take $c'\in B_m $ such that $\Nrm_{m,n}(c')=c$. Let $J $ be an ideal of $K_m$ such that $[J]=c'$ and let $I=\Nrm_{m,n}(J)$. Then $[I]=c$. Let $J^{\sigma-1}=(\beta)$ and $I^{\sigma-1}=(\alpha)$, where $\beta\in K_m^*$ and $\alpha=\Nrm_{m,n}(\beta)$.  Set $\varepsilon=\Nrm_{m,0}(\beta)=\Nrm_{n,0}(\alpha)$. Then $\varepsilon\in \mO_K^*$. 
    
    Let $K_{n,\PP_i}$ be the localization of $K_n$ at $\PP_i$. We have $K_{\PP_i}\cong\Q_p$ because $p$ splits completely in $K$. By local class field theory, a local unit in $\mO_{K_{\PP_i}}$ sits in $\Nrm_{m,0}(K_{m,\PP_i}^*)$ if and only if it is a $p^m$-th power in $K_{\PP_i}^*$. Hence, $\varepsilon$ is a $p^m$-th power in  $K_{\PP_i}^*$ (see Section \ref{sec4}). Let $\mP_i$ be the prime ideal in $K^+$ below $\PP_i$. Since $p$ splits completely in $K$, we have $ K_{\PP_i}\cong K^+_{\mP_i}\cong K_{\tilde{\PP}_i}\cong \Q_p$. 

    Recall that the group generated by the roots of unity of $K$ and $\mO^*_{K^+}$ has index 1 or 2 inside $\mO_{K}^*$ (see Theorem 4.12 in Washington \cite{MR1421575}). Assume that $\varepsilon^2=\varepsilon'\varepsilon''$ such that $\varepsilon'$ is a root of unity inside $K$ and $\varepsilon''$ is a unit of $K^+$.  Since we assume $p$ splits completely in $K$, and $p$ splits completely in $\Q(\zeta_n)$ if and only if $p\equiv 1\Mod{n}$, we have the order of $\varepsilon'$ divides $p-1$. Thus $\varepsilon'$ is a $p^m$-th power.
    
    Since $\varepsilon''$ is a unit in $K^+$, we have $\varepsilon^2$ is a $p^m$-th power in  $K_{\PP_i}^*$ if and only if $\varepsilon''$ is a $p^m$-th power in  $K_{\PP_i}^*$ if and only if $\varepsilon''$ is a $p^m$th power in  $(K^+)_{\mP_i}^*$ if and only if
    $\varepsilon''$ is a $p^m$-th power in  $K_{\tilde{\PP}_i}^*$if and only if
    $\varepsilon^2$ is a $p^m$-th power in  $K_{\tilde{\PP}_i}^*$. In other words, $\varepsilon^2$ is a $p^m$-th power after localization at any prime above $p$. Let $x,y \in \Z$ such that $2x+p^my=1$, so that $\varepsilon=\varepsilon^{2x+p^my}$ is a $p^m$-th power after localization at any prime above $p$. By Leopoldt's conjecture and enlarging $m$ if necessary, we may assume $\varepsilon=\eta^{p^n}$ for some $\eta \in \mO_{K}^*$.  

    Since $\Nrm_{n,0}(\alpha \eta^{-1})=1$, there exists $\gamma\in K_n$ such that $\alpha \eta^{-1}=\gamma^{\sigma-1}$. Thus
    \[
   I^{\sigma-1}=(\alpha)=(\alpha \eta^{-1})=(\gamma^{\sigma-1})
    \]
    
So the ideal class $c\in B_n$ contains a fractional ideal $I(\gamma)^{-1}$ that is fixed by $\Gal(K_n/K)$. Hence $c\in \Am_{st}(K_n/K)[p^\infty]$. Therefore, $B_n=\Am_{st}(K_n/K)[p^\infty]$.
    
\end{proof}

\section{The Ambiguous Class Groups}\label{sec4}

Let $p$ be an odd prime, and $K$ a CM field with maximal real subfield $F$ such that assumptions \ref{ass1} and \ref{ass4} hold.  In this section, we compare the $S$-ramified $\Z_p$-extension $K_\infty/K$ of Theorem \ref{unique Zp} and the cyclotomic $\Z_p$-extension $F^c_\infty/F$ for the totally real field $F$ by computing a certain norm index. Fukuda and Komatsu \cite{MR0845905} give a numerical criterion to determine if $\lambda=0$ for the cyclotomic $\Z_p$-extension of a real quadratic field. We will give an analogous result for the $S$-ramified $\Z_p$-extension $K_\infty/K$ of an imaginary biquadratic field.  First, we will review some needed results from local class field theory (see \cite{milneCFT} for more details).

\begin{theorem}[Local Artin Reciprocity]\label{LAM}
    Let $K/F$ be an Abelian Galois extension local fields.  Then the local Artin map gives an isomorphism
    \[
    F^{*}/\Nrm_{K/F}(K^*) \cong \Gal(K/F).
    \]
\end{theorem}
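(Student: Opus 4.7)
This is the fundamental theorem of local class field theory, and the plan is to outline the cohomological proof as developed in Milne's notes cited by the authors. The argument proceeds in three stages: reduce to finite cyclic extensions, construct a fundamental class via the local invariant map, and apply Tate's theorem to deliver the isomorphism.

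First I would reduce to the case where $K/F$ is finite and cyclic. The infinite abelian case follows by taking a projective limit over finite abelian quotients, and any finite abelian extension decomposes as a compositum of cyclic subextensions for which both $F^*/\Nrm_{K/F}(K^*)$ and $\Gal(K/F)$ behave functorially. Second, for cyclic $K/F$ of degree $n$, the periodicity of Tate cohomology gives the identifications
\[
F^*/\Nrm_{K/F}(K^*) = \hat{H}^0(\Gal(K/F), K^*), \qquad \Gal(K/F) \cong \hat{H}^{-2}(\Gal(K/F), \Z).
\]
Tate's theorem then produces the desired isomorphism as cup product with a fundamental class $u_{K/F} \in H^2(\Gal(K/F), K^*)$, provided one exhibits such a class of order exactly $n$. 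The fundamental class comes from the local invariant map $\mathrm{inv}_F : H^2(\Gal(\bar F/F), \bar F^*) \xrightarrow{\sim} \Q/\Z$, which I would construct by first treating the maximal unramified extension using Hilbert 90 (which kills $H^1$) together with the valuation $v:(F^{ur})^* \to \Z$ to reduce to $H^2(\hat\Z,\Z) \cong \Q/\Z$, and then verifying that every cohomology class over a finite extension is split by an unramified subextension.

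The main obstacle is the cyclic norm index formula $|F^*/\Nrm_{K/F}(K^*)| = n$, equivalently the claim that $H^2(\Gal(K/F), K^*)$ has order exactly $n$. The inequality $\ge n$ (the first inequality) comes from a Herbrand quotient calculation: one shows the Herbrand quotient of $K^*$ equals $n$ by combining the valuation short exact sequence $1 \to \mathcal{O}_K^* \to K^* \to \Z \to 0$ with the $p$-adic logarithm to reduce the unit computation to an additive one handled by the normal basis theorem. The reverse inequality (second inequality) is subtler: in the tamely ramified case it follows from Kummer theory applied to $F(\zeta_n)$, whereas the wildly ramified case requires either a careful descent argument or direct input from Lubin--Tate formal groups to produce enough norms explicitly. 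Once both inequalities are in place and the invariant map has been shown to be compatible with inflation from finite layers, Tate's theorem packages everything into the asserted isomorphism $F^*/\Nrm_{K/F}(K^*) \cong \Gal(K/F)$.
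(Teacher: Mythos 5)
You should first note that the paper contains no proof of this statement: Theorem \ref{LAM} is quoted as a black box from local class field theory, with a pointer to Milne's notes, and is used in the sequel only through its corollary that the norm index at a totally ramified, split prime in the $n$-th layer of a $\Z_p$-extension equals $p^{n}$ (this is what feeds into Lemma \ref{LCFT}). So there is no argument in the paper to compare yours against; the only fair comparison is ``citation versus proof sketch.''

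On its own terms, your outline names the right ingredients of the standard cohomological proof (Hilbert 90, the invariant map on the unramified tower, the fundamental class, Tate's theorem), but two of the connecting claims are off. First, the opening reduction to finite \emph{cyclic} extensions is not how the abelian case is obtained: Tate's theorem applies to an arbitrary finite Galois group $G$ and gives $\hat{H}^{-2}(G,\Z)=G^{\mathrm{ab}}\cong\hat{H}^{0}(G,K^{*})=F^{*}/\Nrm_{K/F}(K^{*})$ directly, whereas genuinely assembling the abelian case out of its cyclic subextensions would require knowing $\Nrm_{K/F}K^{*}=\bigcap_{i}\Nrm_{K_{i}/F}K_{i}^{*}$ (the norm limitation statement), which is not the formal ``functoriality'' you invoke and is itself a consequence of reciprocity. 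Second, your division of the cyclic norm index formula into a Herbrand-quotient lower bound and a subtler Kummer-theoretic upper bound imports the narrative of the \emph{global} second inequality: for local fields, once $\hat{H}^{1}(G,K^{*})=0$ by Hilbert 90, the computation $h(K^{*})=n$ already yields $|F^{*}/\Nrm_{K/F}(K^{*})|=n$ exactly, and Kummer theory is really needed only for the existence theorem, which the isomorphism asserted here does not require. These are defects of exposition in a roadmap rather than an unrepairable gap --- following Milne's development would fill every step --- but as written the sketch conflates the local and global arguments and would not assemble into a proof without reorganization.
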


\begin{theorem}[The Local to Global Principal]\label{L2G}
Suppose $L/K$ is a cyclic Galois extension.  Then if $\gamma \in K$ is a local norm from $L_v$ for all places $v$ of $L$, then $\gamma$ is a global norm from $L$.
\end{theorem}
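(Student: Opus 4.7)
The plan is to establish this local-to-global principle via the cohomology of id\`eles, combining local reciprocity (Theorem \ref{LAM}) with the standard vanishing results of global class field theory. Set $G = \Gal(L/K)$, which is cyclic by hypothesis, so that Tate cohomology of $G$-modules is $2$-periodic.

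First I would apply Tate cohomology to the short exact sequence of $G$-modules
\[
1 \rightarrow L^* \rightarrow \A_L^* \rightarrow C_L \rightarrow 1,
\]
where $\A_L^*$ is the id\`ele group and $C_L = \A_L^*/L^*$ the id\`ele class group. The resulting long exact sequence contains the piece
\[
\widehat{H}^{-1}(G, C_L) \rightarrow \widehat{H}^0(G, L^*) \rightarrow \widehat{H}^0(G, \A_L^*).
\]
The middle term is, by definition, $K^*/\Nrm_{L/K}(L^*)$. Shapiro's lemma applied place-by-place identifies the right-hand term with $\bigoplus_v K_v^*/\Nrm_{L_w/K_v}(L_w^*)$, where for each place $v$ of $K$ one chooses a single place $w$ of $L$ above $v$; Theorem \ref{LAM} further identifies each summand with $\Gal(L_w/K_v)$. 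The connecting arrow between these two terms is the diagonal-then-localize map $\gamma \mapsto (\gamma)_v$, so its injectivity is exactly what needs to be proved.

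The key input is the vanishing $\widehat{H}^1(G, C_L) = 0$, which is a cohomological incarnation of Hilbert 90 for id\`ele classes and one of the fundamental theorems of global class field theory. Because $G$ is cyclic, $2$-periodicity yields $\widehat{H}^{-1}(G, C_L) \cong \widehat{H}^1(G, C_L) = 0$; the first map in the displayed sequence is therefore zero and the localization map is injective, giving the theorem. The main obstacle is of course this deep input $\widehat{H}^1(G, C_L) = 0$, which is proved through the classical ``first and second inequality'' arguments (or, in more modern treatments, by computing the Brauer group of a number field); the role of the cyclic hypothesis is purely to convert this global vanishing into the $\widehat{H}^{-1}$ statement demanded by the long exact sequence.
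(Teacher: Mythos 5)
The paper does not prove this statement: it is the classical Hasse norm theorem, quoted as a known result of class field theory with a pointer to Milne's notes. Your argument is the standard cohomological proof found in that reference --- idèle class group sequence, Shapiro's lemma, $\widehat{H}^1(G,C_L)=0$, and periodicity of Tate cohomology for cyclic $G$ --- and it is correct, modulo routine points you elide (e.g.\ that $\widehat{H}^n(G_w,\mO_w^*)=0$ at unramified places, which is what makes the direct sum decomposition of $\widehat{H}^0(G,\A_L^*)$ work).
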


Now, suppose that $p$ splits completely in $K/\Q$, and that $K_n$ is the $n$-th layer in a $\Z_p$-extension of $K$.  Then $K_n/K$ is cyclic and is unramified outside of the primes above $p$. If $v$ is a place of $K_n$ unramified in $K_n/K$, then the norm map of the resulting local fields is surjective at the group of local units.  Suppose $\PP$ is a prime of $K$ above $p$ which is totally ramified in $K_n$, and $\PP_n$ is a prime of $K_n$ above $\PP$. Let $(K_n)_{\PP_n}$ be the completion of $K_n$ at the  prime $\PP_n$ and $K_0=K$.  By Theorem \ref{LAM},
\[
K_{\PP}^{*}/\Nrm_n((K_n)_{\PP_n}^*) \cong \Z/p^n\Z
\]
where $\Nrm_n$ is the norm map from $(K_n)_{\PP_n}$ to $K_{\PP}$. Since we assume that $p$ splits completely in $K$, we have $K_{\PP}\cong \Q_p$. Let $\pi$ and $\varpi$ be uniformizers of $K_{\PP}$ and $(K_n)_{\PP_n}$ respectively, such that $\Nrm_n(\varpi) = \pi$. Let  $\mO_{\pi}$ and $\mO_{\varpi}$ be the local rings of integers in each field. Let $\mu_{p-1}$ be the group of $(p-1)$-st roots of unity. We have 
\[
(K_n)_{\PP_n}^{*} \cong \varpi^{\Z} \times \mu_{p-1} \times (1 + \varpi\mO_\varpi)
\]
and 
\[
K_{\PP}^{*} \cong  \pi^{\Z} \times\mO_\pi^*
 \cong  \pi^{\Z} \times \mu_{p-1} \times (1 + \pi\mO_\pi)
\]
Let $\psi:K_{\PP}^{*} \rightarrow \mO_\pi^*$ be the projection of onto the factor $\mO_\pi^*$. Now, $\Nrm_n((K_n)_{\PP_n}^{*}) \cong \pi^{\Z} \times \mu_{p-1} \times (1 + \pi^m \mO_\pi)$ for some $m$, and Theorem \ref{LAM} implies $m = n+1$.  Let us now prove the following useful lemma.

\begin{lemma}\label{LCFT}
Assume $p$ splits completely in $K$ and that the ramified primes in the $\Z_p$-extension of $K_\infty/K$ are totally ramified. Keep the same notation as above and let $\gamma \in \mO_K$.  Then $\gamma \in \Nrm_n(K_n^{*})$ if and only if $\psi(\gamma)$ is a $p^n$-th power modulo $\PP$ for all primes $\PP$ of $K$ above $p$ which are totally ramified in $K_n/K$. In particular, if $\gamma 
\in \mO_K^*$, then  $\gamma \in \Nrm_n(K_n^{*})$ if and only if \[\gamma^{p-1}\equiv 1\mod{\PP^{n+1}}\]
for all $\PP$ ramified in $K_n/K$.
\end{lemma}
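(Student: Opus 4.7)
The approach is to apply the Hasse norm theorem for cyclic extensions (Theorem \ref{L2G}) to $K_n/K$. This reduces the global condition $\gamma\in\Nrm_n(K_n^*)$ to the local condition that $\gamma$ is a norm at every completion of $K$. I would then check this place by place and argue that the only nontrivial constraints come from the totally ramified primes above $p$.

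At archimedean places, $K$ is CM so every completion is $\C$; the local extension is trivial and every element is a local norm. At a finite place $v$ of $K$ not above $p$, the extension $K_n/K$ is unramified at $v$, and in any unramified extension of local fields the norm map is surjective on local units; so under the ``in particular'' hypothesis $\gamma\in\mO_K^*$ this imposes no constraint (in the general case $\gamma\in\mO_K$, the condition $v(\gamma)\in f_v\Z$ is automatic at places where $\gamma$ is a unit). At a prime $\PP$ above $p$ that is not totally ramified in $K_n/K$, the running assumption \eqref{ass2} forces $\PP$ to be unramified, so the same argument applies.

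The content sits at a totally ramified prime $\PP$ above $p$. Using the decomposition $K_\PP^*\cong\pi^{\Z}\times\mu_{p-1}\times(1+\pi\mO_\pi)$ and the identification $\Nrm_n((K_n)_{\PP_n}^*)=\pi^{\Z}\times\mu_{p-1}\times(1+\pi^{n+1}\mO_\pi)$ established just before the lemma via local Artin reciprocity (Theorem \ref{LAM}), and noting that $\psi$ strips off the $\pi^{\Z}$ component, we get that $\gamma$ is a local norm at $\PP$ iff $\psi(\gamma)\in\mu_{p-1}\times(1+\pi^{n+1}\mO_\pi)$. Modulo $\PP^{n+1}$ we have $(\mO_\pi/\PP^{n+1})^*\cong\mu_{p-1}\times(1+\pi\mO_\pi/\pi^{n+1}\mO_\pi)$ with the one-unit factor of order $p^n$; since $\gcd(p^n,p-1)=1$, raising to the $p^n$-th power kills the one-unit factor and is bijective on $\mu_{p-1}$, so the $p^n$-th powers in $(\mO_\pi/\PP^{n+1})^*$ are exactly $\mu_{p-1}\cdot(1+\PP^{n+1})/(1+\PP^{n+1})$. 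Hence the local norm condition is equivalent to $\psi(\gamma)$ being a $p^n$-th power modulo $\PP^{n+1}$, which gives the first assertion.

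For the ``in particular'' statement, with $\gamma\in\mO_K^*$ we may write $\gamma=\psi(\gamma)=\zeta u$ for $\zeta\in\mu_{p-1}$ and $u\in 1+\pi\mO_\pi$; the local norm condition becomes $u\equiv 1\Mod{\PP^{n+1}}$. Since $1+\pi\mO_\pi$ is a pro-$p$ group, the $(p-1)$-st power map is a bijection on it, so this is equivalent to $u^{p-1}\equiv 1\Mod{\PP^{n+1}}$, and together with $\zeta^{p-1}=1$ this gives $\gamma^{p-1}\equiv 1\Mod{\PP^{n+1}}$. The main obstacle, such as it is, lies in the bookkeeping at the totally ramified primes---matching ``$p^n$-th power modulo $\PP^{n+1}$'' with the concrete subgroup $\mu_{p-1}\cdot(1+\pi^{n+1}\mO_\pi)\subset\mO_\pi^*$---while the local-to-global reduction and the treatment of the other places are essentially automatic.
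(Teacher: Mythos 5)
Your proof is correct and follows essentially the same route as the paper's: reduce to local norm conditions via the Hasse norm theorem for the cyclic extension $K_n/K$, then use the local reciprocity computation $\Nrm_n((K_n)_{\PP_n}^*)=\pi^{\Z}\times\mu_{p-1}\times(1+\pi^{n+1}\mO_\pi)$ at the totally ramified primes to translate the local norm condition into the stated congruence. Your explicit handling of the archimedean and unramified places, and your identification of the $p^n$-th powers modulo $\PP^{n+1}$ with $\mu_{p-1}\cdot(1+\PP^{n+1})$, merely spell out steps the paper leaves implicit.
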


\begin{proof}
  Suppose that $\gamma \in \mO_K$ and let $S$ be the set of primes that ramify in $K_n/K$. By the local to global principal \ref{L2G}, $\gamma\in \Nrm_n(K_n^*)$ if and only if $\gamma$ is local norm for all primes $\PP\in S$. Since we are assuming that $K_n/K$ is totally ramified at $\PP\in S$, we have $\Nrm_n((K_n)_{\PP_n})=\pi^\Z\times \mu_p\times (1+\pi^{n+1}\mO_\pi)$. Hence $\gamma\in \Nrm_n(K_n^*)$ if and only if $\psi(\gamma)^{p-1}\in 1+\pi^{n+1}\mO_\pi$ if and only if $\psi(\gamma)^{p-1}\equiv 1 \mod \PP^{n+1}$. 
  Since we assume that $p$ split in $K$, $K_\PP\cong \Q_p$. Hence, $\psi(\gamma)^{p-1} \in 1+\pi^{n+1}\mO_\pi$ if and only if $\psi(\gamma)$ is a $p^n$-th power. The lemma now follows. 
\end{proof}

For a number field $L$ we denote $E(L)$ to be the group of units of $\mO_L$ and $W(L)$ to be the roots of unity in $L$.  Note that if an odd prime $p$ splits completely in $L$ then the order of $W(L)$ is coprime to $p$.  Indeed, if $W(L)$ contained a primitive $p$-th root of unity, then $p$ would be ramified in $L/\Q$.

Let $p$ be an odd prime, and $K$ a CM field satisfying Leopoldt's conjecture with $F$ its maximal totally real subfield.  Further, suppose that $p$ splits completely in $K/\Q$.  We define $S$ and $S^+$ as we did in the previous sections, for example, see the beginning of section \ref{sec1}.  Let $K_{\infty}/K$ be the $S$-ramified $\Z_p$-extension $K \subseteq K_1 \subseteq \dots \subseteq K_{\infty}$, and let $F \subseteq F_1^c \subseteq \dots \subseteq F_{\infty}^c$ be the cyclotomic $\Z_p$-extension of $F$.  We assume that any ramified primes in $K_{\infty}/K$ or $F_{\infty}^c/F$ are totally ramified. We still keep assumption s\eqref{ass1}, \eqref{ass3}, \eqref{ass2}, as in previous sections.

\begin{proposition}\label{P5.5}
With the above set up,
\[
 E(K)/( \Nrm_n(K_n^*) \cap E(K))  \cong  E(F) /(\Nrm_n((F^c_n)^*) \cap E(F) )
 \]
\end{proposition}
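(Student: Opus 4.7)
$\textbf{Proof plan.}$ The plan is to apply Lemma \ref{LCFT} twice to translate both norm indices into explicit congruence conditions, then exploit the fact that $p$ splits completely (so $K_{\PP_i} \cong F_{\mP_i} \cong \Q_p$) to match them up. The map $\phi : E(F)/V_n \to E(K)/U_n$ induced by the inclusion $E(F) \hookrightarrow E(K)$ will turn out to be the desired isomorphism, where $U_n := \Nrm_n(K_n^*) \cap E(K)$ and $V_n := \Nrm_n((F_n^c)^*) \cap E(F)$.

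Applying Lemma \ref{LCFT} to $K_\infty/K$, whose ramified primes are precisely the elements of $S$ by assumption \eqref{ass2}, I obtain
\[
U_n = \{\gamma \in E(K) : \gamma^{p-1} \equiv 1 \pmod{\PP^{n+1}} \text{ for all } \PP \in S\}.
\]
Since $p$ splits completely in $F$ and $F_\infty^c/F$ is (by hypothesis) totally ramified at each prime of $F$ above $p$, i.e. at each $\mP \in S^+$, the same lemma applied to $F$ gives the analogous description of $V_n$ with $\PP \in S$ replaced by $\mP \in S^+$. For $\gamma \in E(F)$ and $\PP \in S$ lying over $\mP \in S^+$, the prime $\mP$ is unramified in $K/F$ (it splits as $\PP\tilde\PP$), so $\PP^{n+1} \cap \mO_F = \mP^{n+1}$, which means the congruence at $\PP$ is equivalent to the congruence at $\mP$. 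Hence $V_n = E(F) \cap U_n$, and $\phi$ is injective.

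For surjectivity I must prove $E(K) = E(F)\, U_n$. Three ingredients combine: (a) $W(K) \subset U_n$, because $K \hookrightarrow K_\PP \cong \Q_p$ is injective and $\Q_p$ contains only $(p-1)$-th roots of unity, so every $\zeta \in W(K)$ satisfies $\zeta^{p-1} = 1$; (b) $E(K)^2 \subset W(K)\, E(F)$, via the standard decomposition $\varepsilon^2 = (\varepsilon/\bar\varepsilon)(\varepsilon\bar\varepsilon)$, where $\varepsilon\bar\varepsilon \in E(F)$ is fixed by complex conjugation and $\varepsilon/\bar\varepsilon \in W(K)$ by Kronecker's theorem; and (c) $E(K)/U_n$ is a finite $p$-group, since the natural projection
\[
E(K) \longrightarrow \prod_{\PP \in S}(1+\PP\mO_{K,\PP})/(1+\PP^{n+1}\mO_{K,\PP})
\]
obtained from the decomposition $\mO_{K,\PP}^* \cong \mu_{p-1} \times (1+\PP\mO_{K,\PP})$ has kernel exactly $U_n$, and the target is a finite $p$-group.

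Combining (a) and (b), $E(K)^2 \subset W(K)\, E(F) \subset E(F)\, U_n$, so every element of $E(K)/E(F)\, U_n$ has order dividing $2$. By (c) this quotient is also a $p$-group with $p$ odd, hence trivial, yielding $E(K) = E(F)\, U_n$ and completing the argument. The main subtlety is the possible index-$2$ gap $[E(K) : W(K)\, E(F)] \in \{1, 2\}$, which a priori could obstruct surjectivity; this is precisely resolved by observation (c), since $E(K)/U_n$ is a $p$-group with $p$ odd and so carries no $2$-torsion.
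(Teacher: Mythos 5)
Your proposal is correct and follows essentially the same route as the paper's proof: the same inclusion-induced map, the same use of Lemma \ref{LCFT} to identify the kernel via the congruences $\gamma^{p-1}\equiv 1 \Mod{\PP^{n+1}}$ transferred between $\PP$ and $\mP$, and the same surjectivity argument from $[E(K):W(K)E(F)]\leq 2$ together with the fact that $E(K)/U_n$ is a $p$-group with $p$ odd. The only difference is cosmetic: you prove the index-$2$ statement and the $p$-group claim directly, where the paper cites Washington's Theorem 4.12 and leaves the $p$-group assertion implicit.
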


\begin{proof}
For convenience we write $H_n(K) = \Nrm_n(K_n^*) \cap E(K)$ and $H_n(F) = \Nrm_n((F^c_n)^*) \cap E(F)$. Consider the map $\Theta: E(F) \to  E(K)/H_n(K)$, which is the inclusion $E(F) \hookrightarrow E(K)$ followed by the quotient map $E(K) \to E(K)/H_n(K)$.  Notice that $W(K) \subseteq H_n(K)$, since the order of $W(K)$ is coprime to $p$, and $E(K)/H_n(K)$ is a $p$-group.  Thus, we have the containment $E(F)W(K) \subseteq E(F)H_n(K) \subseteq E(K)$.
Now, by Theorem 4.12 of Washington \cite{MR1421575} we have that $[E(K) : E(F)W(K)] \leq 2$, so it must be that $[E(K) : E(F)H_n(K)] \leq 2$.  But $H_n(K) \subseteq E(F)H_n(K) \subseteq E(K)$, and $E(K)/H_n(K)$ is a $p$-group.  This forces $E(F)H_n(K) = E(K)$ (recall that we are assuming $p$ is odd).  The image of $E(F)$ under $\Theta$ is $E(F)H_n(K)/H_n(K)$, so the above argument shows that $\Theta$ is surjective.

Now suppose that $\beta \in \Ker\Theta$.  Then we have that $\beta \in H_n(K)$, and so by Lemma \ref{LCFT}, for any prime $\PP$ of $K$ above $\p$ we have
\[
\beta^{p-1} \equiv 1 \Mod{\PP^{n+1}}.
\]
Let $\mP = \PP \cap \mO_F$ and notice $\beta \in \mO_F$.  This implies that $\beta^{p-1} \equiv 1 \Mod{\mP^{n+1}}$ for all primes $\mP$ of $F$ above $p$. Hence $\beta \in H_n(F)$, and thus $\Ker\Theta \subseteq H_n(F)$.  Suppose that $\beta \in H_n(F)$.  Then By Lemma \ref{LCFT}, we have that 
\[
\beta^{p-1} \equiv 1 \Mod{\mP^{n+1}}
\]
for all primes $\mP$ of $F$ above $p$.  For any prime $\mP$ of $F$ above $p$, we have $\mP\mO_K = \PP \bar{\PP}$, and
\[
\beta^{p-1} \equiv 1 \Mod{\PP^{n+1}} \tab \text{ and } \beta^{p-1} \equiv 1 \Mod{\bar{\PP}^{n+1}}
\]
since $\beta \in E(F)$.  Therefore, $\beta \in H_n(K)$ by Lemma \ref{LCFT}.  This shows that $\Ker\Theta = H_n(F)$ so that $E(F)/H_n(F) \cong E(K)/H_n(K)$.
\end{proof}

\begin{remark}
   Notice that $F_n^c\not \subset K_n$. Proposition \ref{P5.5} is interesting because we do not have a direct relation between $K_n$ and $F^c_n$. Though the extensions $K_n/K$ and $F^c_n/F$ are globally different and unrelated, they are locally  similar due to the assumption that $p$ splits completly in $K/\Q$.  
\end{remark}

Given an extension $L/M$ of number fields, let $\Am_p(L/M)$ denote the $p$-ambiguous class group, that is
\[
\Am_p(L/M) = A(L)^{\Gal(L/M)}
\]
where $A(L)$ is the $p$-class group of $L$.  

\begin{corollary} \label{ambiguous com}
Assume that $p$ splits completely in $K$ and ramified primes in $K_{\infty}/K$ or $F_{\infty}^c/F$ are totally ramified. With the above setup, we have
\[
\frac{|\Am_p(K_n/K)|}{|A(K)|} = \frac{|\Am_p(F^c_n/F)|}{|A(F)|}.
\]
\end{corollary}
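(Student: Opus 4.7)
The plan is to evaluate both sides using Chevalley's formula \eqref{Cheva}, extract the $p$-parts, and then invoke Proposition \ref{P5.5} to match the unit-index factors that remain.

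First I would check that the ramification data agree on the two sides. Since $p$ splits completely in $K/\Q$ it also splits completely in $F/\Q$, so the number $s$ of primes of $F$ above $p$ is $[F:\Q]$. By assumption \eqref{ass1} each such prime splits in $K$, and $S$ picks exactly one prime of $K$ above each prime of $S^+$, so $|S|=s$ as well. The assumption \eqref{ass2} makes the $s$ primes in $S$ totally ramified in $K_n/K$, and classically the $s$ primes of $F$ above $p$ are totally ramified in $F_n^c/F$. Hence in both instances of \eqref{Cheva} the ramification product equals $\prod_v e_v = p^{ns}$.

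Next I would specialize \eqref{Cheva} to $K_n/K$, obtaining
\[
|\Am(K_n/K)| = \frac{h_K \cdot p^{ns}}{p^n \cdot [E(K) : E(K)\cap \Nrm_n(K_n^*)]},
\]
together with the analogous identity for $F_n^c/F$. Because $p$ is odd and the unit index on the right divides $[K_n:K]=p^n$, the entire denominator is a $p$-power, so all prime-to-$p$ contributions on the right come from the class number. Passing to $p$-parts and dividing by $|A(K)|$ (respectively $|A(F)|$) yields
\[
\frac{|\Am_p(K_n/K)|}{|A(K)|} = \frac{p^{n(s-1)}}{[E(K) : E(K)\cap \Nrm_n(K_n^*)]},
\]
and the same formula with $K$, $K_n^*$ replaced by $F$, $(F_n^c)^*$.

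The conclusion is then immediate: Proposition \ref{P5.5} says precisely that the two unit indices are equal, so the two ratios coincide. The only step requiring real care is the combinatorial bookkeeping at the beginning, namely confirming that exactly $s$ primes ramify in each extension with ramification index $p^n$, despite the fact that $K_n$ and $F_n^c$ are globally unrelated; once this is in place and the $p$-parts are extracted cleanly from \eqref{Cheva}, the corollary follows directly from \eqref{Cheva} together with Proposition \ref{P5.5}.
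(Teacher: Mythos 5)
Your proof is correct and follows essentially the same route as the paper's: both sides are evaluated with Chevalley's formula \eqref{Cheva}, the ramification products are matched ($s$ totally ramified primes, each with index $p^n$, on either side), and Proposition \ref{P5.5} identifies the two unit indices. One small quibble: the unit index $[E(K):E(K)\cap \Nrm_n(K_n^*)]$ need not divide $p^n$ when $E(K)$ has rank greater than one --- it is merely a power of $p$, being the order of a finite group annihilated by $p^n$ --- but that weaker fact is all your argument actually uses.
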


\begin{proof}
Chevalley's formula (\ref{Cheva}) has that
\[
|\Am_p(K_n/K)| = |A(K)| \frac{\prod_{ \PP} e(\PP_n/\PP)}{[K_n:K] [E(K) : \Nrm_n(K_n^*) \cap E(K) ]}
\]
where the product ranges over primes $\PP$ of $K$ above $p$, and $e(\PP_n/\PP)$ denotes the ramification index.  If $\PP\in S$, then $e(\PP_n/\PP) = p^n$.  Similarly,
\[
|\Am_p(F^c_n/F)| = |A(F)| \frac{\prod_{ \mP} e(\mP_n/\mP)}{[F^c_n:F] [E(F) : \Nrm_n((F^c_n)^*) \cap E(F) ]}.
\]
Now, if $\mP$ ramifies in $F_n^c$, then $e(\mP_n/\mP) = p^n$, and the number of ramified primes in $F_n^c/F$ is the same as the number of ramified primes in $K_n/K$.  This together with the previous proposition proves the Corollary.  
\end{proof}

\subsection{The $S$-Ramified $\Z_p$-Extensions of Imaginary Biquadratic Fields}

As an application, we prove results analogous to those of Fukuda Komatsu \cite{MR0845905} for the $S$-ramified extension of an imaginary biquadratic field.

Let $m,d \in \Z^+$ that are squarefree and coprime.  Denote $k = \Q(\sqrt{-m})$, $F = \Q(\sqrt{d})$, $K = Fk$, and $\vep$ to be the fundamental unit for $K$.  Suppose that $p > 2$ is a prime that splits completely in $K$, with $p\mO_k = \p\tilde{\p}$ and $\p\mO_{K} = \PP\bPP$.  Suppose that 
\[
k \subseteq k_1 \subseteq k_2 \subseteq \dots \subset \bigcup_n k_n = k_{\infty}
\]
 is the unique $\Z_p$-extension of $k$ unramified outside $\p$. Put $K_n=Fk_n$ and $K_\infty=Fk_\infty$. Then $K_\infty/K$ is the $S$-ramified $\Z_p$-extension of $K$, where $S$ is the set of primes above $\p$ in $K$. 
 
 Denote $E_n$ to be the units of $\mO_{K_n}$, and $\PP_n$ to be the prime of $\mO_{K_n}$ that lies above $\PP$.  Let $\Nrm_{n,m}: K_n \to K_m$ be the norm map, and $\Nrm_n : K_n \to K$ the norm map from $K_n$ to $K$. For any number field $L$, we denote $h_L$ to be the class number of $L$. 

 The following is an analogue of the main theorem in \cite{MR0845905}:

 \begin{theorem}\label{TH5.7}
 Suppose $p \nmid h_K$, and that $r$ is the smallest positive integer such that 
 \[
 \vep^{p-1}\equiv 1 \Mod{\bPP^r}.
 \]
 If $\Nrm_{r-1}(E_{r-1}) = E_0$ then $\mu=\lambda = 0$ for the $S$-ramified $\Z_p$-extension of $K_\infty/K$ defined above.
 \end{theorem}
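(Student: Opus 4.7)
\noindent\textit{Proof plan.}
The strategy is to apply Theorem \ref{TH4.1} by showing $B_n = D_n$ for every $n \geq r-1$; this forces $|A_n|$ to be bounded and hence $\mu = \lambda = 0$. Since $K$ is abelian over $\Q$, Leopoldt's conjecture for $K$ follows from Brumer--Baker, so Theorem \ref{TH4.1} applies. The assumption $p \nmid h_K$ gives $A_0 = 0$, so $i_{0,n}(A_0) = 0$ in every $A_n$ and $D_0 = B_0 = 0$.

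First I would compute $|B_n|$ via Chevalley's formula \eqref{Cheva}. Both primes $\PP, \bPP$ are totally ramified in $K_n/K$ by assumption \eqref{ass2}, so $\prod_v e_v = p^{2n}$, and since $p \nmid h_K$ the $p$-part gives
\[
|B_n| \;=\; \frac{p^n}{[E_0 : E_0 \cap \Nrm_n(K_n^*)]_p}.
\]
By Lemma \ref{LCFT} a unit $\eta \in E_0$ lies in $\Nrm_n(K_n^*)$ iff $\eta^{p-1} \equiv 1 \Mod{\PP^{n+1}}$ and $\eta^{p-1} \equiv 1 \Mod{\bPP^{n+1}}$. Kronecker's theorem gives $\bar\vep / \vep \in W(K)$, and since $p$ splits completely in $K$ we have $|W(K)| \mid p-1$; thus $\vep^{p-1} = \bar\vep^{p-1}$, so $v_\PP(\vep^{p-1}-1) = v_\bPP(\vep^{p-1}-1) = r$. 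A standard $p$-adic logarithm computation in $K_\PP \cong \Q_p$ then yields
\[
v_\PP(\vep^{k(p-1)}-1) \;=\; v_p(k) + r
\]
for every $k \in \Z$, from which $[E_0 : E_0 \cap \Nrm_n(K_n^*)]_p = p^{\max(0, n+1-r)}$. Therefore $|B_n| = p^{r-1}$ for all $n \geq r-1$.

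Next I would compute $|D_n|$. Chevalley's formula for strongly ambiguous classes combined with $i_{0,n}(A_0) = 0$ gives $|D_n| = p^n/[E_0 : \Nrm_n(E_n)]_p$. The hypothesis $\Nrm_{r-1}(E_{r-1}) = E_0$ forces the index to be $1$ at $n = r-1$, so $|D_{r-1}| = p^{r-1} = |B_{r-1}|$; since $D_{r-1} \subseteq B_{r-1}$ by assumption \eqref{ass2}, we obtain $D_{r-1} = B_{r-1}$. I would then propagate this by induction on $n$: assume $D_n = B_n$ for some $n \geq r-1$. Because $\PP, \bPP$ are totally ramified of residue degree $1$, $\Nrm_{n+1,n}([\PP_{n+1}]) = [\PP_n]$ and similarly for $\bPP$, so $\Nrm_{n+1,n} : D_{n+1} \twoheadrightarrow D_n$ is surjective. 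Hence $\Nrm_{n+1,n} : B_{n+1} \to B_n$ has image containing $D_n = B_n$ and is surjective; since $|B_{n+1}| = |B_n| = p^{r-1}$ it is an isomorphism. Restricting to $D_{n+1} \subseteq B_{n+1}$ gives an injection onto $D_n = B_n$, so $|D_{n+1}| = p^{r-1} = |B_{n+1}|$ and thus $D_{n+1} = B_{n+1}$. By induction $B_n = D_n$ for all $n \geq r-1$, and Theorem \ref{TH4.1} concludes.

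The main technical obstacle is the $|B_n|$ computation, in particular the identity $v_\PP(\vep^{k(p-1)}-1) = v_p(k) + r$ obtained via the $p$-adic logarithm in $K_\PP \cong \Q_p$, which uses crucially that $p$ splits completely and is odd. A second delicate point is the symmetry $v_\PP(\vep^{p-1}-1) = v_\bPP(\vep^{p-1}-1)$ needed to transfer the hypothesis, stated only at $\bPP$, to the congruence at $\PP$ appearing in Lemma \ref{LCFT}.
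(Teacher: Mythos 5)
Your overall strategy matches the paper's: both proofs reduce to showing $B_n = D_n$ for all $n \geq r-1$ via Chevalley's formula (\ref{Cheva}) and then invoke Theorem \ref{TH4.1}. The difference lies in how the index $[E_0 : E_0 \cap \Nrm_n(K_n^*)]$ is computed: the paper transfers it to the real quadratic subfield $F$ via Proposition \ref{P5.5} and quotes Fukuda--Komatsu's Lemma 2, whereas you compute it directly in $K_\PP \cong \Q_p$ with the $p$-adic logarithm. Your route is more self-contained (it bypasses Proposition \ref{P5.5} and the citation), at the cost of redoing the local computation. Your closing induction is also unnecessary: taking $\beta \in E_{r-1}$ with $\Nrm_{r-1}(\beta) = \vep$, one has $\vep^{p^{n-r+1}} = \Nrm_n(\beta) \in \Nrm_n(E_n)$, so $[E_0 : \Nrm_n(E_n)]_p \leq p^{n-r+1}$ and hence $|D_n| \geq p^{r-1} = |B_n|$ for every $n \geq r-1$ simultaneously, which is essentially how the paper concludes.

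There is, however, one genuine flaw in your argument: the justification of the symmetry $v_\PP(\vep^{p-1}-1) = v_\bPP(\vep^{p-1}-1)$. Despite the notation, $\bPP$ is \emph{not} the complex conjugate of $\PP$: complex conjugation acts nontrivially on $k$ and sends $\p$ to $\tilde{\p}$, hence sends the primes above $\p$ to the primes above $\tilde{\p}$. The two primes $\PP, \bPP$ dividing $\p$ are interchanged by the nontrivial element $\tau$ of $\Gal(K/k)$, not by complex conjugation, so Kronecker's theorem applied to $\bar{\vep}/\vep$ relates $v_\PP$ to $v_{\tilde{\PP}}$, which is not what you need. The symmetry you want is still true, but for a different reason: $\tau$ induces an automorphism of $E(K)/W(K) \cong \Z$, so $\tau(\vep) = \zeta\vep^{\pm 1}$ with $\zeta \in W(K)$ of order dividing $p-1$, whence $\tau(\vep)^{p-1} = \vep^{\pm(p-1)}$ and $v_\bPP(\vep^{p-1}-1) = v_\PP(\tau(\vep)^{p-1}-1) = v_\PP(\vep^{p-1}-1)$. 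Alternatively, you can avoid the symmetry entirely: writing $r_\PP := v_\PP(\vep^{p-1}-1)$, your computation gives $|B_n| = p^{\min(r_\PP,\, r)-1}$ for large $n$, and the containment $D_{r-1} \subseteq B_{r-1}$ together with $|D_{r-1}| = p^{r-1}$ already forces $r_\PP \geq r$. With this repair your argument is correct.
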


\begin{remark}
  \cite[Proposition 1]{MR0190132}  tells us that $p \nmid h_K$ implies $p \nmid h_F$ and $p \nmid h_k$. Hence primes ramified in the $\Z_p$-extension $k_\infty/k$ are totally ramified. Thus the primes ramified in the $\Z_p$-extension $K_\infty/K$ is totally ramified. Since $[F:\Q]=2$ and $p\geq 3$, primes ramified in the cyclotomic $\Z_p$-extension $F_\infty^c/F$ are also totally ramified. Therefore, we may apply Proposition \ref{P5.5} in the following proof.  

\end{remark}
 \begin{proof}
Suppose that $\Nrm_{r-1}(E_{r-1}) = E_0$.  Then there is $\beta \in E_{r-1}$ such that $\Nrm_{r-1}(\beta) = \vep$, and so for any $n \geq r-1$ we have $\vep^{p^{n-r+1}} \in \Nrm_n(E_n)$.  Thus, $|E_0/\Nrm_n(E_n)| \leq p^{n-r+1}$.  Now, $|E_0/(\Nrm_n(K_n^*) \cap E_0)| = |E(F)/(\Nrm_n((F_n^c)^*) \cap E(F))|$ by Proposition \ref{P5.5}.  Meanwhile, Fukuda and Komatsu \cite[Lemma 2]{MR0845905} calculate that $|E(F)/(\Nrm_n((F_n^c)^*) \cap E(F))| = p^{n-r+1}$, hence $|E_0/\Nrm_n(E_n)| \leq p^{n-r+1} = |E_0/(\Nrm_n(K_n^*) \cap E_0)| \leq |E_0/\Nrm_n(E_n)|$.
By Chevalley's formula, $B_n = D_n$ for all $n \geq r-1$ so that $\mu=\lambda = 0$ by Theorem \ref{TH4.1}.
 \end{proof}

 \begin{lemma}\label{L5.9}
    Suppose that $p$ splits in $K$. Let $\p$ be a prime above $p$ in $F$. Let $\PP$ be a prime above $\p$ in $K$, $\vep_K$ the fundamental unit of $E(K)$, and $\vep_F$ the fundamental unit of $F$.  Then
     \[
     \vep_K^{p-1} \equiv 1 \Mod{\PP^n} \tab \iff \tab \vep_F^{p-1} \equiv 1 \Mod{\p^n}.
     \]
 \end{lemma}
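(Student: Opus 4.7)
\emph{Proof proposal.} The plan is to reduce both congruences to a common local field via the splitting hypothesis, and then to relate the two fundamental units using Washington's unit-index theorem together with a short $p$-adic argument.

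Since $p$ splits completely in $K$, the completions $K_\PP$ and $F_\p$ are each canonically isomorphic to $\Q_p$, and the natural embedding $F_\p\hookrightarrow K_\PP$ is an isomorphism of $\Q_p$-algebras. Consequently, for any $\eta\in\mO_F$ the congruence $\eta\equiv 1\Mod{\p^n}$ in $F_\p$ coincides with $\eta\equiv 1\Mod{\PP^n}$ in $K_\PP$, and in both cases with $\eta\equiv 1\Mod{p^n}$ in $\Z_p$. Thus it suffices to show that, viewed inside $\Z_p$, $\vep_K^{p-1}\equiv 1\Mod{p^n}$ iff $\vep_F^{p-1}\equiv 1\Mod{p^n}$.

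Next I would compare $\vep_K$ and $\vep_F$ globally. Both $E(K)$ and $E(F)$ have $\Z$-rank $1$ (the former because $K$ is a CM field of degree $4$), so $E(K)/W(K)\cong\Z$ and $E(F)/\{\pm1\}\cong\Z$. By Theorem $4.12$ of Washington \cite{MR1421575}, the index $Q:=[E(K):W(K)E(F)]$ is either $1$ or $2$. The inclusion $E(F)\hookrightarrow E(K)$ induces an injection $E(F)/\{\pm1\}\hookrightarrow E(K)/W(K)$ whose image has index $Q$, so $\vep_F=\zeta\,\vep_K^{\pm Q}$ for some $\zeta\in W(K)$. Because $p$ splits completely in $K$, the image of $W(K)$ inside $K_\PP^*\cong\Q_p^*$ lies in $\mu_{p-1}$, and hence $\zeta^{p-1}=1$. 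Thus in $\Z_p$
\[
\vep_F^{p-1}=\bigl(\vep_K^{p-1}\bigr)^{\pm Q}.
\]

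If $Q=1$ the stated equivalence is immediate. If $Q=2$, set $u=\vep_K^{p-1}$. Fermat's little theorem in $\Z_p$ gives $u\in 1+p\Z_p$, so $u+1\equiv 2\Mod{p}$ is a unit in $\Z_p$ (here we use that $p$ is odd). Therefore $u^2\equiv 1\Mod{p^n}$ implies $(u-1)(u+1)\equiv 0\Mod{p^n}$ and hence $u\equiv 1\Mod{p^n}$; the reverse direction is trivial. The only mild obstacle is correctly extracting the relation $\vep_F=\zeta\,\vep_K^{\pm Q}$ from Washington's index formula; the local reduction and the squaring-is-injective step on $1+p\Z_p$ are then routine.
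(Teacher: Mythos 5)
Your proof is correct and takes essentially the same route as the paper's: reduce both congruences to $\Z_p$ via the complete-splitting hypothesis, relate $\vep_F$ to $\vep_K$ through Washington's index theorem with the root of unity killed by the exponent $p-1$ (since $\#W(K)\mid p-1$), and finish with a $p$-adic argument on $1+p\Z_p$. The only cosmetic difference is that the paper phrases the last step as ``$x^{p-1}$ is a $p^{n-1}$-th power,'' whereas you undo the exponent $Q\in\{1,2\}$ directly via the factorization $(u-1)(u+1)$; these are the same observation that raising to a power prime to $p$ is injective on $1+p\Z_p$ modulo $p^n$.
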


\begin{proof}
  Let $W(K)$ be the group of roots of unity in $K$. Since $p$ splits completely in $K$,  we have $\#W(K)$ divides $p-1$.  Recall that $[E(K):W(K)E(F)]\leq 2$ by \cite[Theorem 2.13]{MR1421575}, so $\varepsilon_K^2=\varepsilon_F^q\zeta$ for some $q\in \Z$ and $\zeta\in W(K)$. Since $E(F)\subset E(K)$, we have $\varepsilon_F=\varepsilon_K^r\eta$ for some $r\in \Z$ and $\eta\in W(K)$.  Let $F_\p$ be the completion of $F$ at $\p$ and $K_\PP$ be the completion of $K$ at $\PP$. Since $p$ splits completely in $K$, we have $F_\p\cong K_\PP\cong \Q_p$. Then $\varepsilon_F^{p-1}\equiv 1 \Mod {\p^n}$ if and only if $\varepsilon_F^{p-1}$ is a $p^{n-1}$-th power if and only if $\varepsilon_K^{p-1}$ is a $p^{n-1}$-th power if and only if $  \vep_K^{p-1} \equiv 1 \Mod{\PP^n}$.
\end{proof}
\begin{corollary}[Analogue of Lemma 4 in \cite{MR0845905}]
Let $\vep$ be a fundamental unit of $E(K)$.  Suppose that $p \nmid h_K$, and that
\[
\vep^{p-1} \equiv 1 \Mod{\bPP^2} \tab \text{ but } \tab \vep^{p-1} \not\equiv 1 \Mod{\bPP^3}.
\] 
Write $\PP^{h_K} = (\alpha)$, and suppose that 
\[
\alpha^{p-1} \equiv 1 \Mod{\bPP} \tab \text{ but } \tab \alpha^{p-1} \not\equiv 1 \Mod{\bPP^2}
\]
Then $\mu=\lambda = 0$.
\end{corollary}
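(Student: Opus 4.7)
My plan is to apply Theorem \ref{TH5.7} with $r = 2$, which is distinguished by $\vep^{p-1} \equiv 1 \pmod{\bPP^r}$ and $\vep^{p-1} \not\equiv 1 \pmod{\bPP^{r+1}}$. Thus I must verify $\Nrm_1(E_1) = E_0$. Since $E_0/W(K) \cong \Z$ is generated by $\vep$ and $|W(K)| \mid p-1$ (because $p$ splits completely in $K$, forcing $W(K) \hookrightarrow \mu_{p-1}$), the $p$-th power map is an automorphism of $W(K)$, giving $W(K) \subseteq \Nrm_1(E_1)$. Hence it suffices to show $\vep \in \Nrm_1(E_1)$.

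The first step is to show $\vep \in \Nrm_1(K_1^*)$ via Hasse's norm theorem for the cyclic extension $K_1/K$. By Lemma \ref{LCFT}, the needed local conditions at the two ramified primes $\PP$ and $\bPP$ are $\vep^{p-1} \equiv 1 \pmod{\PP^2}$ and $\vep^{p-1} \equiv 1 \pmod{\bPP^2}$. The latter is the given hypothesis. For the former, I invoke the Galois symmetry: the non-trivial $\tau \in \Gal(K/k)$ swaps $\PP \leftrightarrow \bPP$ and acts on $E_0/W(K)$ by $\pm 1$, so $\tau(\vep^{p-1}) = \vep^{\pm(p-1)}$ (any root-of-unity factor is killed by raising to the $(p-1)$-st power). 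Applying $\tau$ transfers the $\bPP^2$ congruence to the $\PP^2$ congruence, with the ``$-$'' sign case reducing to the ``$+$'' case because $\vep$ is a unit at $\PP$.

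To upgrade $\vep$ from a global norm to a norm from a unit, I apply Chevalley's formulas \eqref{Cheva} together with the $\alpha$ hypothesis. With $|A_0| = 1$ (from $p \nmid h_K$), $\prod e_v = p^2$, and $[K_1:K]=p$, the first step gives $|B_1| = p$ and $|\Am(K_1/K)_{st}[p^\infty]| = p/[E_0:\Nrm_1(E_1)]^{(p)}$. Since $A_0 = 0$, we have $\Am(K_1/K)_{st}[p^\infty] = D_1$, so showing $|D_1| = p$ forces $[E_0:\Nrm_1(E_1)]^{(p)} = 1$; the argument in the proof of Theorem \ref{TH5.7}, applied to the $p$-primary component, then yields $B_n = D_n$ for all sufficiently large $n$ and hence $\mu = \lambda = 0$ by Theorem \ref{TH4.1}.

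To produce $|D_1| \geq p$, I argue by contradiction: if the $p$-part of $[\PP_1] \in \Cl(K_1)$ were trivial, then (since $[\PP_1]$ has order dividing $p h_K$) $\PP_1^{h_K}$ would be principal, say $\PP_1^{h_K} = (\beta)$, giving $\alpha = u \beta^p$ for some $u \in E_1$. Then $\alpha^{p-1} = u^{p-1}(\beta^{p-1})^p$ in $K_{1,\bPP_1}$. The factor $(\beta^{p-1})^p$ lies in the $p$-th level of the principal unit filtration at $\bPP_1$ (as the $p$-th power of a principal local unit, since $\beta$ is a local unit at $\bPP_1 \neq \PP_1$), while $u^{p-1}$ is constrained by $\Nrm_1(u^{p-1}) = \vep^{a(p-1)}$ (for $\Nrm_1(u) = \vep^a \zeta$) lying in $1+\bPP^2$ thanks to the $\vep$ hypothesis. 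A careful local computation in the wildly ramified extension $K_{1,\bPP_1}/\Q_p$, using its different and the interplay of the $p$-th power and trace maps, should then force $\alpha^{p-1} \equiv 1 \pmod{\bPP^2}$, contradicting the $\alpha$ hypothesis. This final local valuation analysis is the main obstacle, and closely parallels the computations in the proof of Lemma 4 in \cite{MR0845905}.
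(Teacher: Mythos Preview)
Your overall strategy---apply Theorem \ref{TH5.7} with $r=2$ by showing $B_1=D_1$, computing $|B_1|=p$ via Chevalley, and then proving $\PP_1^{h_K}$ is non-principal---is exactly the route the paper takes. Your use of the involution $\tau\in\Gal(K/k)$ to transport the congruence at $\bPP$ to $\PP$ is a clean way to justify $E_0\subseteq\Nrm_1(K_1^*)$; the paper asserts this as a consequence of Lemma \ref{LCFT} without spelling out the symmetry, so your version is in fact more explicit. The paper reaches $|B_1|=p$ by a slightly different path (Lemma \ref{L5.9} plus Corollary \ref{ambiguous com} and Proposition~1 of \cite{MR0845905}), but your direct Chevalley computation is equivalent.

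The genuine gap is in your final step. You write $\alpha=u\beta^p$ in $K_1$ and then propose a ``careful local computation in the wildly ramified extension $K_{1,\bPP_1}/\Q_p$'' to force $\alpha^{p-1}\equiv 1\pmod{\bPP^2}$. You yourself flag this as the main obstacle, and indeed the filtration bound you cite, $(\beta^{p-1})^p\in 1+\bPP_1^{p}\mO$, only gives $\alpha^{p-1}\equiv 1\pmod{\bPP}$, not modulo $\bPP^2$; pushing further requires controlling $u^{p-1}$ via the norm map on higher unit groups, which is delicate.

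The paper avoids all of this with a one-line trick: instead of working inside $K_1$, take the global norm of the hypothetical relation $\PP_1^{h_K}=(\alpha_1)$ down to $K$. Since $\Nrm_1(\PP_1)=\PP$, one gets $\Nrm_1(\alpha_1)=\alpha\vep^t\zeta$ for some $t\in\Z$ and $\zeta\in W(K)$. But $\Nrm_1(\alpha_1)$ is tautologically in $\Nrm_1(K_1^*)$, so Lemma \ref{LCFT} gives $\Nrm_1(\alpha_1)^{p-1}\equiv 1\pmod{\bPP^2}$ immediately. Combined with $\vep^{(p-1)t}\equiv 1\pmod{\bPP^2}$ and $\zeta^{p-1}=1$, this yields $\alpha^{p-1}\equiv 1\pmod{\bPP^2}$, the desired contradiction. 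In other words, the local class field theory you need is already packaged in Lemma \ref{LCFT}; norming down to $K$ lets you invoke it directly rather than unwinding it by hand in $K_{1,\bPP_1}$.
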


\begin{proof}
We again follow the proof of Fukuda and Komatsu in \cite{MR0845905}.  Under our assumptions, Lemma \ref{LCFT} implies that $E(K) = \Nrm_1(K_1^*) \cap E(K)$.  Therefore,
\[
[B_1 : D_1] = [\Nrm_1(K_1^*) \cap E(K): \Nrm_1(E_1)] = [E(K) : \Nrm_1(E_1)].
\]
Therefore, by Theorem \ref{TH5.7}, if $B_1 = D_1$ then $\lambda = 0$.  By Lemma \ref{L5.9}, we have that 
\[
\vep_F^{p-1} \equiv 1 \Mod{\p^2}\tab \text{but}\tab \vep_F^{p-1} \not\equiv 1 \Mod{\p^3}.
\]
So by Proposition 1 in \cite{MR0845905} combined with Corollary \ref{ambiguous com}, we have $|B_1| = p$.  Let $\PP_1$ be the prime of $K_1$ above $\PP$ in $K$.  Then the class of $\PP_1^{h_K}$ generates $D_1$ (here we are using the assumption that $p \nmid h_K$).  We will show that $\PP_1^{h_K}$ is not principle.  Indeed, suppose that $\PP_1^{h_K} = (\alpha_1)$.  Then 
\[
\Nrm_1(\alpha_1) = \alpha \vep^t
\]
for some $t \in \Z$.  Now, Lemma \ref{LCFT} implies that $\Nrm_1(\alpha_1)^{p-1}\equiv 1 \Mod{\bPP^2} $. Thus $\alpha^{p-1} \equiv 1 \Mod{\bPP^2}$, which contradicts our assumptions on $\vep$ and $\alpha$.
\end{proof}

\printbibliography
\end{document}